\begin{document}
\title[ GENERALIZED OF THE ZK EQUATION ]
{THE IVP FOR CERTAIN DISPERSION GENERALIZED \\ OF THE ZK EQUATION IN THE CYLINDER SPACE}

\author[Carolina Albarracin, Guillermo Rodriguez]
{Carolina Albarracin, Guillermo Rodriguez-Blanco}

\address{Carolina Albarracin \newline
Departamento de Matematicas,
 Universidad Nacional de Colombia, 
Carrera 30, calle 45, Bogot\'a, Colombia}
\email{calbarracinh@unal.edu.co}

\address{Guillermo Rodriguez \newline
Departamento de Matematicas,
 Universidad Nacional de Colombia, 
Carrera 30, calle 45, Bogot\'a, Colombia}
\email{grodriguezb@unal.edu.co}

\subjclass[2010]{35B10, 35A01}
\keywords{Well-posedness, Sobolev spaces, Zakharov-Kutnesov equation
 \hfill\break\indent }

\begin{abstract}
 We establish well-posedness  for the Cauchy problem associated to the dispersion generalized Zakharov-Kutnesov equation in the cylinder. Our main ingredient is a localized Strichartz estimate and an argument of compactness. 
\end{abstract}

\maketitle
\numberwithin{equation}{section}
\newtheorem{theorem}{Theorem}[section]
\newtheorem{lemma}[theorem]{Lemma}
\newtheorem{definition}[theorem]{Definition}
\newtheorem{proposition}[theorem]{Proposition}
\newtheorem{remark}[theorem]{Remark}
\allowdisplaybreaks

\section{Introduction}
We deal with the  well-posedness of the initial value problem (IVP) in the cylinder:
\begin{align}\label{bipe}
	\begin{cases}
		\partial_{t}u-\partial_{x}\left(D_{x}^{1+\alpha}\pm D_{y}^{1+\beta}\right)u+uu_{x}=0 & (x,y)\in\mathbb{R}\times\mathbb{T},\; t\in\mathbb{R}, \\
		u\left(0\right)=\phi & \phi\in H^{s}(\mathbb{R}\times\mathbb{T}),
	\end{cases}
\end{align}
where   $\alpha>-1$, $\beta\geq1$,  $u=u(x,y,t)$ is a real-valued  function,   $\mathbb{T}=\mathbb{R}/(2\pi\mathbb{Z})$ and $D^{\beta}_{y}=(
-\partial_{y}^{2})^{\frac{\beta}{2}}$. The  homogeneous fractional derivative in the variable $y$ is defined via Fourier transform by $\widehat{(D^{\beta}_{y}f)}(m,n):
=|n|^{\beta}\hat{f}(m,n)$ ( analogously is defined $D_{x}^{\alpha}$ for the variable $x$). Moreover,

\begin{align}\label{masa}
	M(u)=\int_{\mathbb{R}\times\mathbb{T}}u^{2}dxdy
\end{align}
and
\begin{align}\label{energia}
	E(u)=\frac{1}{2}\int_{\mathbb{R}\times\mathbb{T}}\left(\left(D^ {\frac{1+\alpha}{2}}_{x}u\right)^{2}\pm\left(D_{y}^{\frac{1+\beta}{2}}u\right)^{2}-\frac{u^{3}}{3}\right)dxdy,
\end{align}
 are conserved by the flow of the equation associated with the IVP
(\ref{bipe}).
The equation in ($\ref{bipe}$) 
with,  $\alpha=\beta=1$ and the positive  sign, is the well-known Zakharov-Kuznetsov (ZK) equation, that is a natural bi-dimensional extension of the well-known Korteweg de Vries (KdV) equation and describes the propagation of nonlinear ion-sonic waves in a magnetized plasma (see \cite{zk} for more  information to this respect). The well-posedness for  the ZK equation in the cylinder $\mathbb{R}\times\mathbb{T}$ was treated by Linares, Pastor and Saut in \cite{linares2010well}, where they adapt the method used by Ionescu and Kenig   \cite{ionescu2009local} to the KP-I equation in the same setting, obtaining local well-posedness in $H^{s}(\mathbb{R}\times\mathbb{T})$ for $s>\frac{3}{2}$. 
Subsequently, this result was improve by Molinet and Pilod in \cite{molinet2015bilinear}
who proved global well-posed in  $H^{1}(\mathbb{R}\times\mathbb{T})$, the main ingredient of the proof is a bilinear Strichartz estimate in the context
of Bourgain’s spaces,  which allows to control the high-low frequency interactions
appearing in the nonlinearity of the equation. 
For $\alpha=0$,   $\beta=1$ and the positive sign, the equation (\ref{bipe}) coincides with  the Benjamin-Ono-Zakharov-Kuznetsov (BOZK) equation that
is a model for thin nano-conductors on a dielectric
substrate in   \cite{latorre2006evolution}(see \cite{esfahani2009instability,esfahani2011ill,cunha2016ivp} for  information of  well-posedness).
However, excluding the above cases, the equation (\ref{bipe}) does not seem to have known
physical relevance. Therefore, the equation serves as a mathematical model to
study the  dispersive effects on the $x$ and $y$ directions.
Other  equations similar to (\ref{bipe}) in  $\mathbb{R}^{2}$
 are the generalized  Benjamin-Ono-Zakharov-Kuznetsov (gBOZK) equation,
 \begin{align}
     \partial_{t} u -D_{x}^{1+\alpha}u_{x}+u_{xyy}=uu_{x}
 \end{align}
and  the fractional Zakharov-Kuznetsov (fZK)
equation, 
\begin{align}\label{shipa}
    \partial_{t}u+\partial_{x}(-\Delta)^{\alpha/2}u=uu_{x}.
\end{align}
The wellposedness for the gBOZK  equation was established by F. Ribaud and S. Vento in \cite{ribaud2016local}  where they proved local  well-posedness for $s>\frac{2}{\alpha}-\frac{3}{4}$ with $0\leq\alpha\leq1$, their  prove consists in an energy method
combined with linear and nonlinear estimates in the short-time Bourgain’s spaces. For  the fZK equation  (\ref{shipa})  R.  Shippa in \cite{schippa2019cauchy}
proves local  well-posedness for  $s>\frac{5}{2}-\alpha$ with $1\leq\alpha\leq2$,   as consequence of  a
short-time bilinear Strichartz estimate, join with  perturbative and energy methods.
Our goal in this work is to improve the local well-posedness in Sobolev space $H^{s}(\mathbb{R}\times\mathbb{T})$, $s>2$ for (\ref{bipe}), which is obtained from parabolic regularization. 
To improve this result we use localized Strichartz's estimates, more accurately we adapt the method used to prove local well-posedness for the Cauchy problem associated to the third-order KP-I and fifth-order KP-I equations on $\mathbb{R}\times\mathbb{T}$  proposed in  \cite{ionescu2009local}. 

Our main results are established below,


\begin{theorem}\label{teoimpo2}
	Let,    $\alpha>-1$,  $\beta\geq1$, $\phi\in H^{s}\left(\mathbb{R}\times\mathbb{T}\right)$ and $s>\frac{6-\alpha}{4}-\frac{1}{2(1+\beta)}+\frac{\lceil \beta \rceil}{2}$,
	where $\lceil\: \rceil$ is the ceiling funtion. Then,  there exist  $T=T\left(\left\Vert \phi\right\Vert _{H^{s}}\right)$ and a unique solution of IVP (\ref{bipe}), in the class
	\begin{align*}
	    C\left(\left[0,T\right];H^{s}\left(\mathbb{R}\times\mathbb{T}\right)\right)\cap L^{1}\left(\left[0,T\right];W^{1,\infty}\left(\mathbb{R}\times\mathbb{T}\right)\right).
	\end{align*}
	 Moreover, the map $\phi\in H^{s}\left(\mathbb{R}\times\mathbb{T}\right)\mapsto u\in C\left(\left[0,T\right];H^{s}\left(\mathbb{R}\times\mathbb{T}\right)\right)$ is continuous.
\end{theorem}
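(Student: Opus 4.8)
The plan is to construct the solution by a compactness argument applied to a family of regularized problems, extracting uniform bounds from the interplay between a classical energy estimate and a localized (short-time) Strichartz estimate, and then to establish uniqueness and continuous dependence separately. Throughout, the key observation is that the operator $\partial_{x}(D_{x}^{1+\alpha}\pm D_{y}^{1+\beta})$ is skew-adjoint on $L^{2}(\mathbb{R}\times\mathbb{T})$, so the linear flow preserves every $H^{s}$ norm and the only contribution to the growth of $\|u(t)\|_{H^{s}}$ comes from the nonlinearity.

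First I would set up the a priori energy estimate. Writing $J^{s}=(1-\Delta)^{s/2}$, applying $J^{s}$ to the equation in (\ref{bipe}), pairing with $J^{s}u$ in $L^{2}$, and discarding the skew-adjoint linear term, the nonlinear contribution $\langle J^{s}(uu_{x}),J^{s}u\rangle$ is handled by a Kato--Ponce commutator estimate to produce
\begin{align*}
	\frac{d}{dt}\,\|u(t)\|_{H^{s}}^{2}\ \lesssim\ \|u(t)\|_{W^{1,\infty}}\,\|u(t)\|_{H^{s}}^{2}.
\end{align*}
Gronwall's inequality then bounds $\|u\|_{L^{\infty}_{T}H^{s}}$ in terms of $\|\phi\|_{H^{s}}$ as soon as $\int_{0}^{T}\|u(\tau)\|_{W^{1,\infty}}\,d\tau$ is controlled, so the entire argument reduces to bounding this last integral. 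This is precisely where the localized Strichartz estimate enters: decomposing $u$ into dyadic frequency pieces in $x$ together with frequency-localized pieces in the periodic variable $y$, partitioning $[0,T]$ into subintervals of length adapted to the frequency on which the evolution mimics the Euclidean dispersion of order $1+\alpha$ in $x$ and $1+\beta$ in $y$, applying the non-periodic Strichartz estimate on each piece, and summing, one obtains a bound of the form
\begin{align*}
	\|u\|_{L^{1}_{T}W^{1,\infty}}\ \lesssim\ T^{\theta}\,\|u\|_{L^{\infty}_{T}H^{s}}
\end{align*}
valid exactly when $s>\frac{6-\alpha}{4}-\frac{1}{2(1+\beta)}+\frac{\lceil\beta\rceil}{2}$; the correction $-\frac{1}{2(1+\beta)}$ reflects the dispersion order $1+\beta$ in $y$, while $\frac{\lceil\beta\rceil}{2}$ accounts for the finite number of frequency-localized pieces needed because in the non-dispersive periodic direction a genuine Strichartz gain is only available after a bounded number of derivatives. (As a consistency check, for the ZK case $\alpha=\beta=1$ this threshold collapses to $s>\tfrac{3}{2}$, recovering the Linares--Pastor--Saut index.)

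With both estimates in hand I would combine them on the regularized flows. Starting from smooth approximate solutions $u^{N}$ produced by parabolic regularization (which exist on a common time interval by the $s>2$ theory coming from that regularization), the two displays above close a bootstrap: choosing $T=T(\|\phi\|_{H^{s}})$ small enough makes the right-hand sides reabsorbable, yielding $\sup_{N}\big(\|u^{N}\|_{L^{\infty}_{T}H^{s}}+\|u^{N}\|_{L^{1}_{T}W^{1,\infty}}\big)<\infty$ uniformly in $N$. The equation then controls $\partial_{t}u^{N}$ in a space of negative order, so an Aubin--Lions compactness argument extracts a subsequence converging (strongly in $L^{2}_{loc}$, weak-$\ast$ in $L^{\infty}_{T}H^{s}$) to a limit $u$ which solves (\ref{bipe}) and lies in the asserted class $C([0,T];H^{s})\cap L^{1}([0,T];W^{1,\infty})$, the time continuity being upgraded from weak continuity by the Bona--Smith method once uniqueness is available.

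Finally, uniqueness follows by estimating the difference $w=u_{1}-u_{2}$ of two solutions in a lower-regularity norm (e.g.\ $L^{2}$ or $H^{\sigma}$ with $\sigma<s$): the difference of the equations gives $\frac{d}{dt}\|w\|^{2}\lesssim\big(\|u_{1}\|_{W^{1,\infty}}+\|u_{2}\|_{W^{1,\infty}}\big)\|w\|^{2}$, and Gronwall together with the $L^{1}_{T}W^{1,\infty}$ control forces $w\equiv 0$. Continuous dependence of the map $\phi\mapsto u$ is then obtained by the Bona--Smith approximation: regularizing the data, using the uniform bounds to pass to the limit, and invoking uniqueness to promote weak convergence of the flows to convergence in $C([0,T];H^{s})$. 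I expect the main obstacle to be the localized Strichartz estimate itself, specifically the careful bookkeeping of the derivative loss accumulated when summing over the short time intervals and over the frequency-localized pieces in the non-dispersive periodic direction, since it is this bookkeeping that pins down the exact regularity index $s$ in the statement.
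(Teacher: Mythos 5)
Your proposal follows essentially the same route as the paper: a localized (frequency- and time-localized) Strichartz estimate yielding control of $\left\Vert u\right\Vert_{L^{1}_{T}W^{1,\infty}}$, a Kato--Ponce commutator energy estimate closed by Gronwall and a continuity/bootstrap argument, approximation by smooth solutions coming from the $s>2$ theory, compactness to pass to the limit, and uniqueness plus continuous dependence via an $L^{2}$ difference estimate and the Bona--Smith method. The only cosmetic difference is that you invoke Aubin--Lions where the paper shows the approximating sequence is Cauchy in $C([0,T];L^{2})$ and interpolates, which changes nothing essential.
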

This result improve the local well-posedness obtained via parabolic regularization, for specific values of $\alpha$ and $\beta$ for example:
If $\beta=1$ and $\alpha>-1$,  if $1<\beta\leq2$ and $\alpha>1$, if $2<\beta\leq 3$ and  $\alpha>\frac{10}{3}$, if $3<\beta\leq 4$ and  $\alpha>\frac{11}{2}$. In general, if we aim to maintain $s<2$,  growth of $\beta$  implies  growth of $\alpha$. The restriction on $\beta$  in the Theorem  are necessary to guarantee the
convergence both  of the oscillatory integral in    ($\ref{oscilacio}$) bellow and  of the integral ($\ref{249ec}$) bellow.

The paper is organized as follows. In the following section we prove Localized Strichartz Estimate. Section 3, deals with Preliminary and Key estimates. Lastly, the main results are  proved. First, some notation is necesary:

\textbf{\textit{Notation}}. 
\begin{itemize}
	\item $a\lesssim b$ (resp. $a\gtrsim b$) means that
	there exists a positive constant $c$, such that, $a \leq cb$ (resp. $a\geq cb$). 
	\item $a\sim  b$, when $a\lesssim b$ and $a\gtrsim b$. 
\item $\lceil x\rceil= min\{k\in\mathbb{Z}\hspace{0.2cm}| \hspace{0.2cm} x\leq k\}$
	\item \begin{align*}
		\left\Vert u\right\Vert_{L^{p}X}:=\left(\int_{\mathbb{R}}\left(\left\Vert u(t)\right\Vert_{X}\right)^{p}dt\right)^{\frac{1}{p}}\\
		\left\Vert u\right\Vert_{L^{\infty}X}:=\text{ess sup}_{t\in\mathbb{R}}\left\Vert u(t)\right\Vert_{X},
	\end{align*}
	where $X$ is Banach space,  $u:\mathbb{R} \to X$ is a measurable  function and    $p\in[1,\infty]$.
	\item \begin{align*}
		\left\Vert u\right\Vert_{L^{p}_{I}X}:= \left\Vert \chi_{I}(|t|) u\right\Vert_{L^{p}X},
	\end{align*}
	where $I\subseteq\mathbb{R}$ is interval, $\chi_{I}$ is the characteristic function of $I$ and $u:I \to X$ is a measurable  function. In particular, for $T>0$
	\begin{align*}
		\left\Vert u\right\Vert_{L^{p}_{T}X}:=\left\Vert u\right\Vert_{L^{p}_{[-T,T]}X}.
	\end{align*}
	\item 
	\begin{align*}
		\widehat{f}\left(\xi,n\right)= \int_{\mathbb{R}\times\mathbb{T}}f\left(x,y\right)e^{-ix\xi}e^{-iyn}dxdy  , 
	\end{align*}
	where  $f\in L^{1}\left(\mathbb{R}\times\mathbb{T}\right)$ and  $\left(\xi,n\right)\in\mathbb{R}\times\mathbb{Z}$ . It is the Fourier transform of $f$. Same notation if $f\in\mathcal{S}'$.
	\item  $\mathcal{S}= \{f \in C^\infty (\mathbb{R}\times\mathbb{T})\ |\ ||f||_{\alpha\beta}=\sup\limits_{x\in\mathbb{R}}\big|x^\alpha\partial^\beta_{x} f(x,y)\big|<\infty,\ \forall\alpha,\beta \in \mathbb{N} \}$.
	\item $\mathcal{S}'$ is the distribution space.
	
	\item
	\begin{equation*}
		H^s(\mathbb{R}\times\mathbb{T}) = \{f \in \mathcal{S}' \, : \, \int_{\xi\in\mathbb{R}}\sum_{n\in\mathbb{Z}}(1 + \xi^2+n^2)^s \, |\hat{f}(\xi,n)|^2 \,   <\infty\},
	\end{equation*}
denote the usual Sobolev spaces in $L^2(\mathbb{R}\times\mathbb{T})$  for  $s\in\mathbb{R}$,
	and
	\begin{align*}
		H^{\infty}\left(\mathbb{R}\times\mathbb{T}\right)=\underset{s\geq0}{\cap}H^{s}\left(\mathbb{R}\times\mathbb{T}
		\right).
	\end{align*}
\begin{equation*}
	\left\Vert f\right\Vert _{H^{s}}\sim\left\Vert J_{y}^{s}f\right\Vert _{L^{2}\left(\mathbb{R}\times\mathbb{T}\right)}+\left\Vert J_{x}^{s}f\right\Vert _{L^{2}\left(\mathbb{R}\times\mathbb{T}\right)} 
\end{equation*}
	\item
	For integers $j,k=0,1,\cdots$ we define the operators $Q_{x}^j$ and $Q_{y}^k$ on $H^\infty(\mathbb{R}\times\mathbb{T})$ by
	\begin{equation}
		\begin{cases}
			\widehat{Q_{x}^{0}g}\left(\xi,n\right)=\chi_{\left[0,1\right)}\left(\left|\xi\right|\right)\hat{g}\left(\xi,n\right)\\
			\widehat{Q_{x}^{j}g}\left(\xi,n\right)=\chi_{\left[2^{j-1},2^{j}\right)}\left(\left|\xi\right|\right)\hat{g}\left(\xi,n\right) & if\:j\geq1
		\end{cases}
	\end{equation}
	and
	\begin{equation}
		\begin{cases}
			\widehat{Q_{y}^{0}g}\left(\xi,n\right)=\chi_{\left[0,1\right)}\left(\left|n\right|\right)\hat{g}\left(\xi,n\right)\\
			\widehat{Q_{y}^{k}g}\left(\xi,n\right)=\chi_{\left[2^{k-1},2^{k}\right)}\left(\left|n\right|\right)\hat{g}\left(\xi,n\right) & if\:k\geq1,
		\end{cases}
	\end{equation}
where $\chi_{I}$ is the characteristic function of $I$.
	\item We observe that an equivalence for the Sobolev norms on $\mathbb{R}\times\mathbb{T}$,
	\begin{align}
		\left\Vert g
		\right\Vert^{2}_{L^{2}(\mathbb{R}\times\mathbb{T})}\sim\sum_{k,j\geq 0}\left\Vert Q_{x}^{j}Q_{y}^{k}g\right\Vert^{2}_{L^{2}(\mathbb{R}\times\mathbb{T})}
	\end{align}
and
		\begin{align}
		\left\Vert J^{s}_{x} g
		\right\Vert^{2}_{L^{2}(\mathbb{R}\times\mathbb{T})} +\left\Vert J^{s}_{y} g
		\right\Vert^{2}_{L^{2}(\mathbb{R}\times\mathbb{T})} &\sim\sum_{k\geq0,j\geq 1}(2^{j})^{2s}\left\Vert Q_{x}^{j}Q_{y}^{k}g\right\Vert^{2}_{L^{2}(\mathbb{R}\times\mathbb{T})}\\&\nonumber+	\sum_{k\geq1,j\geq 0}(2^{k})^{2s}\left\Vert Q_{x}^{j}Q_{y}^{k}g\right\Vert^{2}_{L^{2}(\mathbb{R}\times\mathbb{T})}\\&\nonumber +\sum_{k,j\geq 0}\left\Vert Q_{x}^{j}Q_{y}^{k}g\right\Vert^{2}_{L^{2}(\mathbb{R}\times\mathbb{T})}
	\end{align}
\end{itemize}

\section{Localized Strichartz Estimates}
In this section, we prove Strichartz estimates, localized in frequency and time. 

Let
$\left\{W_{0}\left(t\right)\right\}_{t}$ be the group associated with the linear part of the equation (\ref{bipe}), that is, 

\begin{align*}
	W_{0}\left(t\right)\phi&=\int_{\xi\in\mathbb{R}}\sum_{n\in\mathbb{Z}}\widehat{\phi}\left(\boldsymbol{\xi}\right)e^{i\left(\boldsymbol{\xi}\textbf{x}+\xi\left(\left|\xi\right|^{1+\alpha}\pm \left|n\right|^{1+\beta}\right)t\right)},
\end{align*}

where  $\boldsymbol{\xi}=\left(\xi,n\right)$, $\textbf{x}=\left(x,y\right)$ and $\boldsymbol{\xi}\textbf{x}=\xi x+ny$.

\begin{theorem}
	\label{striper2}
Let  $\alpha>-1$, $\beta\geq1$ and $\phi\in \mathcal{S}\left(\mathbb{R}\times\mathbb{T}\right)$  
	, then for any $\epsilon>0$, 
	\begin{align}\label{striper2.1}
		\left\Vert \mathbb{W}_{0}\left(.\right)Q_{y}^{k}Q_{x}^{j}\phi\right\Vert _{L_{2^{-(\lceil\beta\rceil k+j)}}^{2}L_{xy}^{\infty}\left(\mathbb{R}\times\mathbb{T}\right)}\lesssim_{\epsilon}2^{-\left(\frac{\alpha}{4}+\frac{1}{2(1+\beta)}+\epsilon\right)j}\left\Vert Q_{y}^{k}Q_{x}^{j}\phi\right\Vert _{L_{xy}^{2}\left(\mathbb{R}\times\mathbb{T}\right)}.
	\end{align}
	
\end{theorem}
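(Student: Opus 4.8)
The plan is to reduce the vector-valued Strichartz estimate to a pointwise kernel bound via the standard $TT^*$ duality, and then to estimate the resulting oscillatory integral by stationary phase / van der Corput in the $x$-variable while handling the $y$-variable (which lives on the torus) by a discrete sum over the finitely many modes $n$ with $|n|\sim 2^k$.

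First I would observe that the factor $2^{-(\lceil\beta\rceil k+j)}$ in the time-localization exponent is exactly the inverse of the natural time scale on which the phase of $W_0(t)Q_y^kQ_x^j\phi$ is essentially stationary; this is dictated by the dispersion relation $\omega(\xi,n)=\xi(|\xi|^{1+\alpha}\pm|n|^{1+\beta})$, whose $t$-derivative scales like $|\xi|^{1+\alpha}\sim 2^{(1+\alpha)j}$ plus $|n|^{1+\beta}\sim 2^{(1+\beta)k}$, and the choice $\lceil\beta\rceil k$ is made so that the $y$-contribution is at most this scale (this is the source of the ceiling function and of the restriction on $\beta$ mentioned after Theorem~\ref{teoimpo2}). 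On this small interval $|t|\lesssim 2^{-(\lceil\beta\rceil k+j)}$, the estimate $\|F\|_{L^2_tL^\infty_{xy}}\lesssim |I|^{1/2}\|F\|_{L^\infty_tL^\infty_{xy}}$ together with Bernstein in the torus direction reduces matters, after a $TT^*$ argument, to bounding the kernel
\begin{equation*}
K_t(x,y)=\int_{\xi\in\mathbb{R}}\sum_{n\in\mathbb{Z}}\chi_{[2^{j-1},2^{j})}(|\xi|)\,\chi_{[2^{k-1},2^{k})}(|n|)\,e^{i(\xi x+ny+\omega(\xi,n)t)}\,d\xi
\end{equation*}
in $L^\infty_{x,y}$, uniformly for $t$ in the short interval.

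The main work is the one-dimensional oscillatory integral in $\xi$ for each fixed admissible mode $n$. Freezing $n$, the phase is $\varphi(\xi)=\xi x+\xi|\xi|^{1+\alpha}t$ (the $n$-term contributes only a linear-in-$\xi$ piece $\pm|n|^{1+\beta}\xi t$, which merely shifts the stationary point), so $\varphi''(\xi)\sim (1+\alpha)(2+\alpha)|\xi|^{\alpha}t\sim 2^{\alpha j}t$ on the dyadic block. By van der Corput's lemma the $\xi$-integral is bounded by $|2^{\alpha j}t|^{-1/2}$ uniformly in $x$, and since the block has length $\sim 2^j$ this also trivially gives a bound $\lesssim 2^j$; interpolating/taking the minimum yields the gain. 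Summing over the $\lesssim 2^k$ integers $n$ with $|n|\sim 2^k$ contributes a factor $2^k$, and here I must check convergence of the sum, which is where the hypothesis $\beta\geq 1$ and the ceiling normalization enter (this is the integral labeled \eqref{249ec} referenced in the introduction). Combining the van der Corput bound, the factor $|I|^{1/2}=2^{-(\lceil\beta\rceil k+j)/2}$ from the time localization, and the Bernstein/Cauchy--Schwarz losses from the torus sum, and then optimizing against the trivial bound, produces the claimed power $2^{-(\frac{\alpha}{4}+\frac{1}{2(1+\beta)})j}$; the arbitrarily small $\epsilon$-loss absorbs the logarithmic endpoint inefficiency in the dyadic summation and the borderline case of the van der Corput estimate.

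The step I expect to be the genuine obstacle is making the van der Corput estimate uniform across the full range $\alpha>-1$: when $-1<\alpha<0$ the second derivative $|\xi|^\alpha$ is unbounded near $\xi=0$ but the block $|\xi|\sim 2^j$ is bounded away from the origin, so the difficulty is really in tracking the exact exponent of $2^j$ rather than in any singularity, whereas for large $\alpha$ one must ensure the phase derivative does not degenerate. A second delicate point is the interaction between the $x$- and $y$-phases: because the time interval is tied to \emph{both} $j$ and $k$ through $2^{-(\lceil\beta\rceil k+j)}$, I need the $y$-oscillation $e^{in y}e^{\pm i|n|^{1+\beta}\xi t}$ to remain controlled after summation, and verifying that the coupling term $|n|^{1+\beta}\xi t$ stays of size $O(1)$ (so that it cannot spoil the stationary-phase analysis in $\xi$) is precisely what forces $\lceil\beta\rceil\geq\beta$ and pins down the stated time scale. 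Once these two points are settled, assembling the factors and summing the dyadic pieces to recover the $H^s$-norm is routine.
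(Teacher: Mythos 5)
Your reduction via $TT^*$ to a pointwise kernel bound on the short time interval, and the van der Corput estimate in $\xi$ giving $|2^{\alpha j}t|^{-1/2}$, both match the paper's strategy (which follows Theorem 9.3.2 of Ionescu--Kenig). But there is a genuine gap in how you treat the periodic variable, and it is precisely the step that produces half of the claimed exponent. You propose to sum trivially over the $\lesssim 2^k$ modes $n$ with $|n|\sim 2^k$, paying a factor $2^k$ (or $2^{k/2}$ via Bernstein/Cauchy--Schwarz). The target kernel bound is $2^{l-(\frac{\alpha}{2}+\frac{1}{1+\beta})j}$ for $|t|\sim 2^{-l}$ with $l\geq \lceil\beta\rceil k+j$; combining your factor $2^{k}$ with the van der Corput bound $2^{(l-\alpha j)/2}$ forces the requirement $l\geq 2k+\frac{2}{1+\beta}j$, which is strictly stronger than the available $l\geq\lceil\beta\rceil k+j$ already at $\beta=1$. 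The loss in $k$ cannot be absorbed, and the final estimate \eqref{striper2.1} indeed carries no positive power of $2^k$. Moreover, you treat the coupling term $\pm|n|^{1+\beta}\xi t$ purely as a harmless linear-in-$\xi$ shift to be kept $O(1)$, so nothing in your argument generates the factor $2^{-\frac{1}{2(1+\beta)}j}$; you assert that "assembling the factors" produces it, but no ingredient you have listed yields a $\beta$-dependent gain.

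The paper's resolution is to apply the Poisson summation formula in $n$, converting the lattice sum into $\sum_{\nu}\int_{\mathbb{R}}\psi_0^2(\eta/2^k)e^{i((y-2\pi\nu)\eta\pm t\xi|\eta|^{1+\beta})}d\eta$; integration by parts (the computation \eqref{249ec}) shows the terms $|\nu|>100$ contribute $O(|\nu|^{-2})$, hence $O(1)$ in total, while each of the finitely many remaining terms is an oscillatory integral in the continuous variable $\eta$. Rescaling that integral against the kernel $H(\cdot)=\int_{\mathbb{R}}e^{i(\cdot\,\eta\pm|\eta|^{1+\beta})}d\eta$, which is bounded for $\beta\geq 1$, produces the factor $(t\xi)^{-1/(1+\beta)}\sim 2^{(l-j)/(1+\beta)}$ in \eqref{oscilacio}. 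This simultaneously removes any $2^k$ loss and supplies the missing $\frac{1}{1+\beta}$ part of the kernel exponent, which becomes $\frac{1}{2(1+\beta)}$ after the $TT^*$ square root. Without this step (or an equivalent exploitation of the oscillation in the $y$-frequency) the bookkeeping does not close.
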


First we recall the following lemmas;
\begin{lemma}
[Poisson Summation Formula]\label{sumpoisson}
If $f$, $\hat{f}$ are in $L^{1}(\mathbb{R}^{n})$ and  satisfy the condition  
	\begin{align*}
		\left|f\left(x\right)\right|+\left|\hat{f}\left(x\right)\right|\leq C\left(1+\left|x\right|\right)^{-n-\delta}   \end{align*}
	for some $C$, $\delta>0$. Then,    
	$f$ and  $\hat{f}$ are continuous and
	\begin{align*}
	\sum_{m\in\mathbb{Z}^{n}}\widehat{f}\left( m\right)=\sum_{k\in\mathbb{Z}^{n}}f\left(2\pi k\right)   
	\end{align*}
	
\end{lemma}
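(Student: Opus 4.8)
The plan is to run the classical periodization argument. Given $f$ with the stated decay, I would first define the periodized function
\[
F(x)=\sum_{k\in\mathbb{Z}^{n}}f\left(x+2\pi k\right),\qquad x\in\mathbb{R}^{n},
\]
and check that the hypothesis $|f(x)|\leq C(1+|x|)^{-n-\delta}$ makes this series converge absolutely and uniformly on compact subsets of $\mathbb{R}^{n}$. Indeed, comparing with the convergent series $\sum_{k}(1+|x+2\pi k|)^{-n-\delta}$ shows that the tails are uniformly small, so $F$ is well defined, continuous, and $2\pi$-periodic in each coordinate. (The same comparison, applied to $\hat f$, is what will be needed at the end.)

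Next I would expand $F$ in its Fourier series on the torus $(\mathbb{R}/2\pi\mathbb{Z})^{n}$, whose coefficients are
\[
c_{m}=\frac{1}{(2\pi)^{n}}\int_{[0,2\pi]^{n}}F(x)\,e^{-im\cdot x}\,dx,\qquad m\in\mathbb{Z}^{n}.
\]
Using the uniform convergence established above, I would interchange the finite-measure integral with the sum defining $F$, and then perform the change of variables $y=x+2\pi k$ on each term. Since $e^{-im\cdot 2\pi k}=1$ for $m,k\in\mathbb{Z}^{n}$, the translated cubes $[0,2\pi]^{n}+2\pi k$ tile $\mathbb{R}^{n}$ exactly and reassemble into a single integral over all of $\mathbb{R}^{n}$, giving $c_{m}=(2\pi)^{-n}\widehat{f}(m)$ with the convention $\widehat{f}(\xi)=\int_{\mathbb{R}^{n}}f(x)e^{-ix\cdot\xi}\,dx$.

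To conclude, I would invoke the second decay hypothesis: from $|\widehat{f}(x)|\leq C(1+|x|)^{-n-\delta}$ the coefficient sequence $\{c_{m}\}=\{(2\pi)^{-n}\widehat f(m)\}$ is absolutely summable, so the Fourier series $\sum_{m}c_{m}e^{im\cdot x}$ converges absolutely and uniformly to a continuous function. Since a continuous periodic function with absolutely summable Fourier coefficients equals the sum of its own series, this series coincides with $F$ at every point; evaluating both representations at $x=0$ yields $\sum_{k}f(2\pi k)=(2\pi)^{-n}\sum_{m}\widehat f(m)$, which is the asserted identity up to the normalization constant fixed by the chosen Fourier convention. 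The continuity of $f$ and of $\widehat f$ claimed in the statement is recovered along the way, the former from the uniform convergence of $F$ and the latter from absolute summability together with the standard fact that $\widehat f$ is continuous whenever $f\in L^{1}$.

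The step demanding the most care, and effectively the heart of the matter, is the twofold use of the decay hypotheses: the decay of $f$ is exactly what justifies interchanging summation and integration when computing $c_{m}$, while the decay of $\widehat f$ is exactly what guarantees that the Fourier series genuinely represents $F$ pointwise at the origin. Both are needed, and keeping track of which hypothesis licenses which convergence is the only delicate bookkeeping in the argument.
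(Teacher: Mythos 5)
Your argument is the classical periodization proof, and in that sense it matches the paper exactly: the paper does not prove the lemma at all, its ``proof'' being the citation to \cite{grafakos2008classical}, Theorem 3.1.17, where precisely this periodization argument is carried out. Your computation of the constant is also correct and worth noting: with the paper's convention $\hat f(\xi)=\int f(x)e^{-ix\cdot\xi}\,dx$, the identity should read $\sum_{k}f(2\pi k)=(2\pi)^{-n}\sum_{m}\hat f(m)$, so the statement as printed in the lemma is off by a factor of $(2\pi)^{-n}$. You were right to flag this; it is harmless where the lemma is used in Section 2, since there only the dual form of the formula and absolute-value bounds enter.

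There is, however, one genuine flaw in your bookkeeping: the continuity of $f$ does not follow ``from the uniform convergence of $F$''. Uniform convergence only preserves the regularity of the partial sums, and each partial sum is a finite sum of translates of $f$; if $f$ is merely an $L^{1}$ function with the stated pointwise decay, nothing in your construction makes $f$ (or $F$) continuous. This matters at exactly the step you yourself call the heart of the matter: the identity between $F$ and its uniformly convergent Fourier series is, a priori, only an a.e.\ equality of two functions on the torus, and you then evaluate at the single point $x=0$ --- a null set --- and moreover use the pointwise values $f(2\pi k)$, which are not well defined for an a.e.-defined $f$. The missing ingredient is Fourier inversion: since $\hat f\in L^{1}$ (its decay hypothesis guarantees this), $f$ agrees a.e.\ with the continuous function $(2\pi)^{-n}\int_{\mathbb{R}^{n}}\hat f(\xi)e^{ix\cdot\xi}\,d\xi$, and one should replace $f$ by this representative at the outset, the decay bound persisting by continuity. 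With $f$ continuous, your uniform-convergence argument does make $F$ continuous, the a.e.\ identity upgrades to equality everywhere, and the evaluation at $x=0$ is licensed; the continuity of $\hat f$ you handled correctly via $f\in L^{1}$. This is how the cited proof in Grafakos proceeds, and with that single correction your proof is complete.
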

\begin{proof}
	See \cite{grafakos2008classical}  Theorem 3.1.17
\end{proof}

\begin{lemma}[Van der Corput]\label{vander}
	Let $p\geq2$, $I
	=[a,b]$,  $\varphi\in C^{p}\left(I\right)$ is a real function  such that $\left|\varphi^{\left(p\right)}\left(x\right)\right|\geq\lambda>0$,
	$\psi\in L^{\infty}\left(I\right)$ and $\psi'\in L^{1}\left(I\right)$,
	then,
	
	\begin{align*}
	  \left|\int_{I}e^{i\varphi\left(x\right)}\psi\left(x\right)dx\right|\leq C_{p}\lambda^{\frac{1}{p}}\left(\left\Vert \psi\right\Vert _{L^{\infty}}+\left\Vert \psi'\right\Vert _{L^{1}}\right).  
	\end{align*}

\end{lemma}
\begin{proof}
	See \cite{stein1993harmonic} Chapter 8.
\end{proof}
\begin{lemma}
\label{io}
		Let $f\in C_{0}^{\infty}\left(\left[a,b\right]\right)$ and  $\tilde{\phi}'\left(x\right)\neq0$
		for any $x\in\left[a,b\right]$.  Then, 
		\begin{equation*}
			I\left(\lambda\right)=\int_{a}^{b}e^{i\lambda\tilde{\phi}\left(x\right)}f\left(x\right)dx=O\left(\lambda^{-k}\right),   
		\end{equation*}
		
		as $\lambda\rightarrow\infty$, for any $k\in\mathbb{Z}^{+}$
	
\end{lemma}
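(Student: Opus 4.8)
The plan is to prove this standard non-stationary phase estimate by repeated integration by parts, exploiting the elementary identity $\frac{d}{dx}\,e^{i\lambda\tilde{\phi}(x)}=i\lambda\,\tilde{\phi}'(x)\,e^{i\lambda\tilde{\phi}(x)}$. The hypothesis $\tilde{\phi}'(x)\neq0$ on the compact interval $[a,b]$ (together with the implicit smoothness of the phase $\tilde{\phi}$) guarantees that $1/\tilde{\phi}'$ is smooth and bounded on $[a,b]$, so this identity may be inverted: we may write $e^{i\lambda\tilde{\phi}}=\frac{1}{i\lambda\tilde{\phi}'}\frac{d}{dx}e^{i\lambda\tilde{\phi}}$. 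Each time we transfer the derivative from the exponential onto the amplitude we gain a factor $\lambda^{-1}$, and the crucial point is that this transfer produces no boundary contribution because $f\in C_{0}^{\infty}([a,b])$ vanishes, together with all its derivatives, at $a$ and $b$.

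First I would carry out a single integration by parts. Inserting the identity above into the definition of $I(\lambda)$ and integrating by parts gives
\begin{equation*}
I(\lambda)=\left[\frac{f(x)}{i\lambda\,\tilde{\phi}'(x)}\,e^{i\lambda\tilde{\phi}(x)}\right]_{a}^{b}-\frac{1}{i\lambda}\int_{a}^{b}\frac{d}{dx}\!\left(\frac{f(x)}{\tilde{\phi}'(x)}\right)e^{i\lambda\tilde{\phi}(x)}\,dx.
\end{equation*}
Since $f(a)=f(b)=0$, the boundary term vanishes, and setting $f_{1}:=-\frac{d}{dx}\bigl(f/\tilde{\phi}'\bigr)$ we obtain $I(\lambda)=\frac{1}{i\lambda}\int_{a}^{b}f_{1}(x)\,e^{i\lambda\tilde{\phi}(x)}\,dx$. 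The key structural observation is that $f_{1}$ again belongs to $C_{0}^{\infty}([a,b])$: it is smooth because $f$ is smooth and $\tilde{\phi}'$ is smooth and nonvanishing, and it is supported in the same compact set as $f$.

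I would then iterate this construction. Defining $f_{0}:=f$ and $f_{m+1}:=-\frac{d}{dx}\bigl(f_{m}/\tilde{\phi}'\bigr)\in C_{0}^{\infty}([a,b])$, a straightforward induction shows that for every $k\in\mathbb{Z}^{+}$,
\begin{equation*}
I(\lambda)=\frac{1}{(i\lambda)^{k}}\int_{a}^{b}f_{k}(x)\,e^{i\lambda\tilde{\phi}(x)}\,dx,
\end{equation*}
whence $|I(\lambda)|\le\lambda^{-k}\,\|f_{k}\|_{L^{1}([a,b])}=O(\lambda^{-k})$ as $\lambda\to\infty$, which is the claim. I do not expect any genuine obstacle here; the argument is routine once the setup is fixed. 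The only points requiring care are purely bookkeeping: verifying that the iterated amplitudes $f_{k}$ remain in $C_{0}^{\infty}([a,b])$ so that the boundary terms vanish at every stage, and recording the implicit hypothesis that $\tilde{\phi}$ is sufficiently smooth (say $C^{\infty}$, or at least $C^{k+1}$ for the desired power) for the $k$-fold integration by parts to be legitimate.
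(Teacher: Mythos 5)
Your proof is correct and is exactly the standard non-stationary phase argument that the paper invokes by citing Chapter 1 of Linares--Ponce, where the lemma is proved by precisely this repeated integration by parts with the compact support of $f$ killing the boundary terms. Your bookkeeping (the iterated amplitudes $f_{k}=-\frac{d}{dx}\bigl(f_{k-1}/\tilde{\phi}'\bigr)$ remaining in $C_{0}^{\infty}([a,b])$, and the implicit smoothness of $\tilde{\phi}$) is the right thing to check, and nothing further is needed.
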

\begin{proof}
	See \cite{linpon} Chapter 1.
\end{proof}

\begin{proof}[\textbf{Proof of Theorem 2.1}]
	Let $\psi_{1}:\mathbb{R}\rightarrow\left[0,1\right]$ denote a smooth even function, supported in  $\left\{ r\mid\left|r\right|\in\left[\frac{1}{4},4\right]\right\} $ and    $\psi_{1}\equiv1$
	in  $\left\{ r\mid\left|r\right|\in\left[\frac{1}{2},2\right]\right\} $,   $\psi_{0}:\mathbb{R}\rightarrow\left[0,1\right]$ denote a smooth even function,  supported in $[-2,2]$ and    $\psi_{0}\equiv1$
	in  $\left[-1,1\right] $, and  
	$a\left(\boldsymbol{\xi}\right)=\widehat{\left(Q_{y}^{k}Q_{x}^{j}\phi\right)}\left(\boldsymbol{\xi}\right)$. As,   $\psi_{1}\left(\frac{\xi}{2^{j}}\right)\psi_{0}\left(\frac{n}{2^{k}}\right)=1$, in  $\left[-2^{j+1},-2^{j-1}\right]\times\left[-2^{k},0\right]\cup\left[2^{j-1},2^{j+1}\right]\times\left[0,2^{k}\right]$, then,
	 \begin{equation*}
	 	\mathbb{W}_{0}\left(t\right)Q_{y}^{k}Q_{x}^{j}\phi\left(\textbf{x}\right)=\int_{\mathbb{R}}\sum_{n\in\mathbb{Z}} a\left(\boldsymbol{\xi}\right)\psi_{1}\left(\frac{\xi}{2^{j}}\right)\psi_{0}\left(\frac{n}{2^{k}}\right)e^{i\left(\boldsymbol{\xi}\textbf{x}+F\left(\boldsymbol{\xi}\right)t\right)}d\xi,
	 \end{equation*}
 where,
 \begin{align*}
 	F\left(\boldsymbol{\xi}\right)=\xi\left|\xi\right|^{1+\alpha}\pm\xi \left|n\right|^{1+\beta}.
 \end{align*}
 Using the same argument as in the proof of Theorem 9.3.2 of \cite{ionescu2009local}, it is enough to prove that,
	\begin{align}\label{des1}
		\left|\int_{\xi\in\mathbb{R}}\sum_{n\in\mathbb{Z}}\psi_{1}^{2}\left(\frac{\xi}{2^{j}}\right)e^{i\left(\xi x+t\xi\left|\xi\right|^{1+\alpha}\right)}\psi_{0}^{2}\left(\frac{n}{2^{k}}\right)e^{i\left(ny\pm t\xi\left|n\right|^{1+\beta}\right)}d\xi\right|\lesssim2^{l-\left(\frac{\alpha}{2}+\frac{1}{1+\beta}\right)j},   
	\end{align}
	for any 
	$x\in\mathbb{R},\;y\in\left[0,2\pi\right)$ and   $\left|t\right|\in\left[2^{-l},2^{-l}2\right]$ and  $l\geq \lceil \beta\rceil k+j$. The  inequality (\ref{des1}), in the case $j=0$ is immediate. For $j>0$,   the Poisson summation formula  Lemma \ref{sumpoisson},  transform the expression inside module on the left-hand side  of (\ref{des1}) to,

	\begin{align*}
		\int_{\xi\in\mathbb{R}}\psi_{1}^{2}\left(\frac{\xi}{2^{j}}\right)e^{i\left(\xi x+t\xi\left|\xi\right|^{1+\alpha}\right)}\underbrace{\sum_{\nu\in\mathbb{Z}}\int_{\mathbb{R}}\psi_{0}^{2}\left(\frac{\eta}{2^{k}}\right)e^{i\left(\left(y-2\pi\nu\right)\eta\pm t\xi\left|\eta\right|^{1+\beta}\right)}d\eta}_{A(\xi,t,y)}.   
	\end{align*}
	
The  integral in $A(\xi,t,y)$, is transformed using integration by parts. That is,

	\begin{align}\label{249ec}
		&\int_{\mathbb{R}}\psi_{0}^{2}\left(\frac{\eta}{2^{k}}\right)e^{i\left(\left(y-2\pi\nu\right)\eta\pm t\xi \left|\eta\right|^{1+\beta}\right)}d\eta\\&\nonumber
		=\int_{\mathbb{\left|\eta\right|\leq}2^{k+1}}\frac{\psi_{0}^{2}\left(\frac{\eta}{2^{k}}\right)}{i\left(\left(y-2\pi\nu\right)\pm(1+\beta)sgn(\eta)\left|\eta\right|^{\beta} \xi t\right)}\frac{d}{d\eta}\left(e^{i\left(\left(y-2\pi\nu\right)\eta\pm t\xi \left|\eta\right|^{1+\beta} \right)}\right)d\eta\\&\nonumber
		=i\int_{\mathbb{\left|\eta\right|\leq}2^{k+1}}\left(\frac{2\cdot2^{-k}\psi_{0}\psi_{0}^{'}}{\left(y-2\pi\nu\right)\pm (1+\beta)sgn(\eta)\left|\eta\right|^{\beta}\xi t}\pm\frac{\psi_{0}^{2}(1+\beta)\beta 
		\left|\eta\right|^{\beta-1}\xi t}{\left(\left(y-2\pi\nu\right)\pm(1+\beta)sgn(\eta)\left|\eta\right|^{\beta} \xi t\right)^{2}}\right) \\&\nonumber\hspace{2cm}
		e^{i\left(\left(y-2\pi\nu\right)\eta+ t\xi\left|\eta\right|^{1+\beta}\right)}d\eta.
	\end{align}
	
For the second term of  the  right-hand  side  of (\ref{249ec}) in the previous integral, we have that,
\begin{align*}
    (1+\beta)\beta\left|\eta\right|^{\beta-1}\xi t\leq 2\beta^{2} 2^{k(\beta-1)}2^{j}2^{-(j+\lceil\beta\rceil k)}\lesssim_{\beta}2^{-k},
\end{align*}
	 \begin{align*}
	   (1+\beta)\left|\eta\right|^{\beta}\xi
	t\lesssim_{\beta} 2^{k\beta}2^{j}2^{-(\lceil\beta\rceil k+j)}\lesssim_{\beta}1,
	 \end{align*}
	$\:y\in\left[0,2\pi\right)$
	and $\left|\nu\right|>100$, then $\left|\left(y-2\pi\nu\right)\pm (1+\beta)sgn(\eta)\left|\eta\right|^{\beta} \xi t\right|^{2}\sim \left|\nu\right|^{2}$. We get,
 \begin{align}\label{integralacot}
	\left|\int_{\mathbb{\left|\eta\right|\leq}2^{k+1}}\frac{\psi_{0}^{2}(1+\beta)\beta\left|\eta\right|^{\beta-1} \xi t}{\left(\left(y-2\pi\nu\right)\pm (1+\beta)sgn(\eta)\left|\eta\right|^{\beta} \xi t\right)^{2}}
	e^{i\left(\left(y-2\pi\nu\right)\eta\pm t\xi\left|\eta\right|^{1+\beta}\right)}d\eta\right|\lesssim_{\beta}2^{k}\frac{2^{-k}}{\left|\nu\right|^{2}}\lesssim_{\beta}\frac{1}{\left|\nu\right|^{2}} .  
\end{align}
	
	Following similar consideration for  the  first  term  of  the  right-hand  side  of (\ref{249ec}). We can conclude that, if $\left|\nu\right|>100$, 
	\begin{align*}
	    \left|\int_{\mathbb{R}}\psi_{0}^{2}\left(\frac{\eta}{2^{k}}\right)e^{i\left(\left(y-2\pi\nu\right)\eta\pm t \xi \left|\eta\right|^{1+\beta}\right)}d\eta\right|\leq\frac{C}{\left|\nu\right|^{3}}+\frac{C}{\left|\nu\right|^{2}}\leq\frac{C}{\left|\nu\right|^{2}}.
	\end{align*}
	Then,
	\begin{align}\label{ecu26}
		A(\xi,t,y)&\nonumber=\sum_{\nu\in \mathbb{Z}}\int_{\mathbb{R}}\psi_{0}^{2}\left(\frac{\eta}{2^{k}}\right)e^{i\left(\left(y-2\pi\nu\right)\eta\pm t\xi\left|\eta\right|^{1+\beta}\right)}d\eta\\&=\sum_{\left|\nu\right|\leq100}\int_{\mathbb{R}}\psi_{0}^{2}\left(\frac{\eta}{2^{k}}\right)e^{i\left(\left(y-2\pi\nu\right)\eta\pm t\xi\left|\eta\right|^{1+\beta}\right)}d\eta+O\left(1\right).  
	\end{align}
	
	The term $O(1)$ 
	 in (\ref{des1}) imply that only we need estimate one integral, which is domined by the right-hand side of (\ref{des1}). Thus, we replace the sum over $n$ with a sum of C integrals, modulo aceptable error, then (\ref{des1}) reduces to the proof of
	 ,

\begin{align}\label{intint2}
	\left|\int_{\mathbb{R}}\psi_{1}^{2}\left(\frac{\xi}{2^{j}}\right)e^{i\left(\xi x+\xi\left|\xi\right|^{1+\alpha}t\right)}\int_{\mathbb{R}}\psi_{0}^{2}\left(\frac{\eta}{2^{k}}\right)e^{i\left(\eta y'\pm\xi t\left|\eta\right|^{1+\beta}\right)}d\eta d\xi\right|\lesssim2^{l-\left(\frac{\alpha}{2}+\frac{1}{1+\beta}\right)}j  
\end{align}
  $\xi\sim2^{j}$,   $\left|t\right| \sim2^{-l}$ and  $\eta\sim2^k$. We write,
	\begin{align}\label{oscilacio}
		\int_{\mathbb{R}}\psi_{0}^{2}\left(\frac{\eta}{2^{k}}\right)e^{i\left(y'\eta \pm t\xi\left|\eta\right|^{1+\beta}\right)}d\eta&=\nonumber\int_{\mathbb{R}}2^{k}\widehat{\psi_{0}^{2}}\left(2^{k}\eta_{1}\right)\widehat{e^{i\left(y'\eta \pm t\xi\left|\eta\right|^{1+\beta}\right)}}(\eta_1)d\eta_{1}\\&=\frac{C}{\left(t\xi\right)^{\frac{1}{1+\beta}}}\int_{\mathbb{R}}\widehat{\psi_{0}^{2}}\left(\lambda\right)H\left(\frac{y'-\frac{\lambda}{2^{k}}}{(t\xi)^{\frac{1}{1+\beta}}}\right)d\lambda
	\end{align}
 $H(\eta_1)=\int_{\mathbb{R}}e^{i(\eta_1\eta\pm \left|\eta\right|^{1+\beta})}d\eta$, $H\in L^{\infty}(\mathbb{R})$ if $\beta\geq1$ (see \cite{kenig1991oscillatory,sidi1986long}). Then, substituting (\ref{oscilacio}) into (\ref{intint2}), it suffices to prove that

\begin{equation}\label{rem2}
		\left|\int_{\mathbb{R}}\frac{1}{\left(\xi t\right)^{\frac{1}{1+\beta}}}\psi_{1}^{2}\left(\frac{\xi}{2^{j}}\right)e^{i\left(\xi x+\xi\left|\xi\right|^{1+\alpha}t\right)} d\xi\right|\lesssim2^{l-\left(\frac{\alpha}{2}+\frac{1}{1+\beta}\right)j}  \end{equation}

where $\left|t\right| \sim2^{-l}$ and $\xi\sim2^{j}$. The Van-der Corput lemma implies that, 
\begin{align}\label{conclusion}
	\left|\int_{\mathbb{R}}\psi_{1}^{2}\left(\frac{\xi}{2^{j}}\right)e^{i\left(x\xi+\xi\left|\xi\right|^{1+\alpha}t\right)}d\xi\right|\leq2^{(l-\alpha j)/2}   \end{align}
	and thus,
\begin{equation}\label{rem3}
\left|\int_{\mathbb{R}}\psi_{1}^{2}\left(\frac{\xi}{2^{j}}\right)e^{i\left(\xi x+\xi\left|\xi\right|^{1+\alpha}t\right)} d\xi\right|\lesssim2^{\frac{\beta}{1+\beta}l-\frac{\alpha}{2}j}  \end{equation}
because  $\frac{\beta}{1+\beta}\geq\frac{1}{2}$ if $\beta\geq1$, then we obtain (\ref{rem2}).

\end{proof}
\section{Preliminary and Key estimates}
As a consequence of the Strichartz inequality  Theorem \ref{striper2}, we obtain:
\begin{lemma}\label{unot}
	Let $\alpha>-1$,  $\beta\geq1$, $u\in C\left(\left[0,T\right];H^{\infty}\left(\mathbb{R}\times\mathbb{T}\right)\right)\cap C^{1}\left(\left[0,T\right];H^{\infty}\left(\mathbb{R}\times\mathbb{T}\right)\right)$ and  
	$f\in C\left(\left[0,T\right];H^{\infty}\left(\mathbb{R}\times\mathbb{T}\right)\right)$,
	$T\in\left[0,1\right]$ such that
	\[
	\partial_{t}u-\partial_{x}\left(D_{x}^{1+\alpha}\pm D_{y}^{1+\beta}\right)u=\partial_{x}f.
	\]
	Then, 	
	\begin{equation}\label{eq31}
		\left\Vert u\right\Vert _{L_{T}^{1}L_{xy}^{\infty}\left(\mathbb{R}\times\mathbb{T}\right)}\lesssim_{s_{1}s_{2}}T^{\frac{1}{2}}\left(\left\Vert J_{x}^{s_{1}}J_{y}^{s_{2}}u\right\Vert _{L_{T}^{\infty}L_{xy}^{2}\left(\mathbb{R}\times\mathbb{T}\right)}+\left\Vert J_{x}^{s_{1}}f\right\Vert _{L_{T}^{1}L_{xy}^{2}\left(\mathbb{R}\times\mathbb{T}\right)}\right),  
	\end{equation}
	for any $s_{1}>\frac{2-\alpha}{4}-\frac{1}{2(1+\beta)}$ and $s_{2}>\frac{\lceil\beta\rceil}{2}$ 
\end{lemma}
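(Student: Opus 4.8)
The plan is to run a Duhamel/Littlewood--Paley argument in which the localized estimate of Theorem \ref{striper2} is applied on time subintervals whose length is exactly the scale $2^{-(\lceil\beta\rceil k+j)}$ appearing there. On a generic subinterval $I_m=[t_m,t_{m+1}]$ of $[0,T]$ I would use the restart form of Duhamel's formula, $u(t)=\mathbb{W}_0(t-t_m)u(t_m)+\int_{t_m}^t\mathbb{W}_0(t-t')\partial_x f(t')\,dt'$, and decompose $u=\sum_{j,k\ge0}Q_x^jQ_y^k u$, so that $\|u\|_{L^1_TL^\infty_{xy}}\le\sum_{j,k}\|Q_x^jQ_y^k u\|_{L^1_TL^\infty_{xy}}$. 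For fixed $(j,k)$ I set $\delta=2^{-(\lceil\beta\rceil k+j)}$ and partition $[0,T]$ into $N\simeq T/\delta$ intervals of length $\simeq\delta$; on each of them Cauchy--Schwarz in $t$ converts the $L^1_t$ norm into $\delta^{1/2}$ times an $L^2_t$ norm, which after translating $I_m$ to the origin is exactly the norm controlled by \eqref{striper2.1}.

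For the homogeneous pieces $\mathbb{W}_0(\cdot-t_m)Q_x^jQ_y^k u(t_m)$ I would apply \eqref{striper2.1} on each $I_m$, bound $\|Q_x^jQ_y^k u(t_m)\|_{L^2}\le\|Q_x^jQ_y^k u\|_{L^\infty_TL^2}$, and sum the $N$ contributions. This produces the factor $N\delta^{1/2}=T\,2^{(\lceil\beta\rceil k+j)/2}$ times the gain $2^{-(\frac{\alpha}{4}+\frac{1}{2(1+\beta)}+\epsilon)j}$; writing $T\le T^{1/2}$ (valid since $T\le1$) and collecting exponents yields $T^{1/2}\,2^{(\frac{2-\alpha}{4}-\frac{1}{2(1+\beta)}-\epsilon)j}\,2^{\frac{\lceil\beta\rceil}{2}k}\|Q_x^jQ_y^k u\|_{L^\infty_TL^2}$. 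The positive power $2^{\frac{\lceil\beta\rceil}{2}k}$ is precisely what the hypothesis $s_2>\frac{\lceil\beta\rceil}{2}$ absorbs, and the $j$-exponent is $<s_1$ once $\epsilon$ is chosen small, because $s_1>\frac{2-\alpha}{4}-\frac{1}{2(1+\beta)}$.

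For the forcing pieces I would estimate the \emph{local} integral $\int_{t_m}^t\mathbb{W}_0(t-t')\partial_x Q_x^jQ_y^k f(t')\,dt'$ by Minkowski's inequality, pull the $L^2_tL^\infty_{xy}$ norm inside, apply \eqref{striper2.1} to each $\mathbb{W}_0(\cdot-t')\partial_x Q_x^jQ_y^k f(t')$ (using $\|\partial_x Q_x^jQ_y^k f\|_{L^2}\simeq 2^j\|Q_x^jQ_y^k f\|_{L^2}$), and then exploit that the $L^1_t$ norm is \emph{additive} over the partition. This leaves only a single factor $\delta^{1/2}=2^{-(\lceil\beta\rceil k+j)/2}$, with \emph{no} factor $N$, so the $k$-dependence becomes the decaying $2^{-\frac{\lceil\beta\rceil}{2}k}$; this is exactly why the right-hand side needs only $J_x^{s_1}f$ and no $y$-regularity on $f$. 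Collecting exponents gives $2^{(\frac{2-\alpha}{4}-\frac{1}{2(1+\beta)}-\epsilon)j}\,2^{-\frac{\lceil\beta\rceil}{2}k}\|Q_x^jQ_y^k f\|_{L^1_TL^2}$.

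Finally I would sum in $(j,k)$. Since $s_1>\frac{2-\alpha}{4}-\frac{1}{2(1+\beta)}$ and $s_2>\frac{\lceil\beta\rceil}{2}$ strictly, I can peel off geometric factors $2^{-\epsilon'j-\epsilon''k}$ and use the pointwise-in-$t$ bounds $2^{s_1j+s_2k}\|Q_x^jQ_y^k u(t)\|_{L^2}\le\|J_x^{s_1}J_y^{s_2}u(t)\|_{L^2}$ and $2^{s_1j}\|Q_x^jQ_y^k f(t)\|_{L^2}\le\|J_x^{s_1}f(t)\|_{L^2}$, so that an $\ell^1$ (geometric) summation—rather than Cauchy--Schwarz—produces the two terms on the right of \eqref{eq31} with no commutation problem between $\sup_t$ and $\sum_{j,k}$. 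The main obstacle is precisely this time--frequency bookkeeping asymmetry: the interval length $2^{-(\lceil\beta\rceil k+j)}$ must be tuned so that summing the $N\simeq T/\delta$ homogeneous pieces trades a power of $T$ for the $y$-loss $2^{\frac{\lceil\beta\rceil}{2}k}$ (controlled by $s_2$), while the forcing term, handled by local Duhamel and $L^1$-additivity, retains a single $\delta^{1/2}$ and hence \emph{gains} $2^{-\frac{\lceil\beta\rceil}{2}k}$; reconciling these two regimes with the common prefactor $T^{1/2}$ (via $T\le1$) is the delicate step.
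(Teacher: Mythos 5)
Your argument is essentially the paper's proof: the authors likewise partition $[0,T]$ at the scale $2^{-(\lceil\beta\rceil k+j)}$, apply Cauchy--Schwarz in $t$ on each subinterval, restart Duhamel's formula there, invoke Theorem \ref{striper2}, and sum the dyadic pieces with exactly the exponent bookkeeping you describe (the factor $2^{(\lceil\beta\rceil k+j)/2}$ from the $N$ homogeneous contributions absorbed by $s_{2}>\frac{\lceil\beta\rceil}{2}$, the single $\delta^{1/2}$ on the forcing term from $L^{1}$-additivity). The one cosmetic difference is that the paper uses $2^{\lceil\beta\rceil k+j}$ subintervals of length $T2^{-(\lceil\beta\rceil k+j)}$, so the Cauchy--Schwarz factor is $\bigl(T2^{-(\lceil\beta\rceil k+j)}\bigr)^{1/2}$ and the prefactor $T^{1/2}$ lands on the $f$-term as well; with your $T$-independent subinterval length the forcing contribution comes out without $T^{1/2}$, a harmless (since $T\le 1$) weakening of the stated inequality that you can avoid by adopting the paper's partition.
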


\begin{proof}[\textbf{Proof of Lemma 3.1}]
	We partition the interval $\left[0,T\right]$ into  $2^{\lceil \beta\rceil k+j}$ equal intervals
	of length $T2^{-\left(\lceil \beta\rceil k+j\right)}$, denote by $\left[a_{k,m},a_{k,\left(m+1\right)}\right)$,  
	$m=0,1,2\cdots2^{j+ \lceil \beta\rceil k}$. We observe that, Cauchy-Schwarz inequality implies that, 
	\begin{align}\label{246}
	\left\Vert Q_{y}^{k}Q_{x}^{j}u\right\Vert _{L_{T}^{1}L_{xy}^{\infty}}&\leq\sum_{m=1}^{2^{\lceil \beta\rceil 
	k+j}}\left\Vert \chi_{\left[a_{k,m},a_{k,\left(m+1\right)}\right)}\left(t\right)Q_{y}^{k}Q_{x}^{j}u\right\Vert _{L_{T}^{1}L_{xy}^{\infty}}
	\\\nonumber
	&\lesssim\left(T2^{-(\lceil\beta\rceil k+j)}\right)^{\frac{1}{2}}\sum_{m=1}^{2^{\lceil \beta\rceil 
	k+j}}\left\Vert \chi_{\left[a_{k,m},a_{k,\left(m+1\right)}\right)}\left(t\right)Q_{y}^{k}Q_{x}^{j}u\right\Vert _{L_{T}^{2}L_{xy}^{\infty}}.
	\end{align}

	Duhamel's formula, 
	\begin{equation}\label{DUHAMEL1}
		u\left(t\right)=W_{0}^{\alpha}\left(t-a_{k,m}\right)\left(u\left(a_{k,m}\right)\right)+\int_{a_{k,m}}^{t}W_{0}^{\alpha}\left(t-s\right)\left(\partial_{x}f\left(s\right)\right)ds,  
	\end{equation}
	for $t\in\left[a_{k,m},a_{k,\left(m+1\right)}\right]$, join with  (\ref{246})
  and   Theorem \ref{striper2}, imply that,
		\begin{align*}
	\left\Vert Q_{y}^{k}Q_{x}^{j}u\right\Vert _{L_{T}^{1}L_{xy}^{\infty}}&\leq2^{-\epsilon\frac{(j+k)}{2}}
		T^{\frac{1}{2}}\left(2^{-(\lceil\beta\rceil k+j)}\sum_{m=1}^{2^{\lceil\beta\rceil k+j}}\left(\left\Vert J_{x}^{s_{1}}J_{y}^{s_{2}}u\left(a_{k,m}\right)\right\Vert _{L_{xy}^{2}}\right)\right.\\&\hspace{4cm}\left.+\left\Vert J_{x}^{s_{1}}f\right\Vert _{L_{T}^{1}L_{xy}^{2}}\right)
	\end{align*}

\end{proof}

To obtain the energy estimate, we need the cylinder version of the Kato -Ponce commutador,
\begin{proposition}
	\label{CONMUTADORBIPE}
	Let $s\geq1$ and $f,g\in H^{\infty}\left(\mathbb{R}\times\mathbb{T}\right)$.
	Then, 
	\begin{equation*}
		\left\Vert J_{y}^{s}\left(fg\right)-fJ_{y}^{s}g\right\Vert _{L^{2}}\leq C_{s} \left\Vert J_{y}^{s}f\right\Vert _{L^{2}}\left\Vert g\right\Vert _{L^{\infty}}+\left(\left\Vert f\right\Vert _{L^{\infty}}+\left\Vert \partial_{y} f\right\Vert _{L^{\infty}}\right)\left\Vert J_{y}^{s-1}g\right\Vert _{L^{2}},   
	\end{equation*}
	$C_{s}$ is a constant.
\end{proposition}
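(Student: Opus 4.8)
The plan is to exploit that $J_y^s$ acts only in the periodic variable $y$, so the estimate reduces to a one-dimensional commutator estimate on $\mathbb{T}$ with the real variable $x$ playing the role of a harmless parameter. Concretely, for a.e.\ fixed $x\in\mathbb{R}$ I set $F=f(x,\cdot)$ and $G=g(x,\cdot)$, functions on $\mathbb{T}$, and suppose we have the one-dimensional periodic Kato--Ponce commutator estimate
\begin{equation*}
\big\|J_y^s(FG)-FJ_y^sG\big\|_{L^2(\mathbb{T})}\le C_s\Big(\|J_y^sF\|_{L^2(\mathbb{T})}\|G\|_{L^\infty(\mathbb{T})}+(\|F\|_{L^\infty(\mathbb{T})}+\|\partial_yF\|_{L^\infty(\mathbb{T})})\|J_y^{s-1}G\|_{L^2(\mathbb{T})}\Big),
\end{equation*}
with a constant $C_s$ independent of $x$. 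I would then square this inequality, integrate in $x$, and pull the $x$-suprema of the $L^\infty_y$ factors outside the integral: since $\|g(x,\cdot)\|_{L^\infty(\mathbb{T})}\le\|g\|_{L^\infty(\mathbb{R}\times\mathbb{T})}$ and likewise for $f$ and $\partial_yf$, Fubini together with $\|J_y^\sigma h\|_{L^2(\mathbb{R}\times\mathbb{T})}^2=\int_{\mathbb{R}}\|J_y^\sigma h(x,\cdot)\|_{L^2(\mathbb{T})}^2\,dx$ turns the fibered bound into exactly the claimed inequality on the cylinder. This reduction is the conceptual core and uses only that $J_y^s$ commutes with multiplication by functions of $x$ alone; the uniformity of $C_s$ in $x$ (automatic, since the one-dimensional argument never refers to $x$) is what makes it close.

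It remains to prove the one-dimensional periodic estimate, for which I would use the Littlewood--Paley operators $Q_y^k$ in $y$ (already introduced, localizing the integer frequencies $n$) and the paraproduct decomposition $FG=\sum_{k,l}Q_y^kF\,Q_y^lG$, split into the three standard regimes: $F$ low ($k\le l-3$), $F$ high and $G$ low ($l\le k-3$), and comparable frequencies ($|k-l|\le 2$). In the high/low regime I would place $J_y^s$ on the high-frequency factor $F$: the output frequency is $\sim 2^k$, so $J_y^s$ costs $2^{ks}$, and using $\|Q_y^lG\|_{L^\infty}\lesssim\|G\|_{L^\infty}$ together with almost orthogonality in $k$ this regime is controlled by $\|J_y^sF\|_{L^2}\|G\|_{L^\infty}$. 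The genuinely commutator part is the $F$-low regime: summing over $k\le l-3$ first, the contribution is $\sum_l[J_y^s,S_{l-3}F]Q_y^lG$, where $S_{l-3}F$ denotes the low-pass truncation of $F$ and the summands are almost orthogonal in $l$.

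The estimate for this last regime rests on the smoothness of the multiplier $\langle n\rangle^s=(1+n^2)^{s/2}$. The symbol of $[J_y^s,S_{l-3}F]$ acting on a function of frequency $\sim 2^l$ is the difference $\langle n\rangle^s-\langle n-m\rangle^s$ with $|m|\lesssim 2^{l-3}$; by the mean value theorem (here $s\ge1$ is used) this is $\lesssim|m|\,\langle n-m\rangle^{s-1}$, which transfers one derivative onto $F$, producing the factor $\|\partial_yS_{l-3}F\|_{L^\infty}\lesssim\|\partial_yF\|_{L^\infty}$, and leaves $J_y^{s-1}$ on $G$. Summing the almost orthogonal pieces in $\ell^2_l$ yields $\|\partial_yF\|_{L^\infty}\|J_y^{s-1}G\|_{L^2}$; I would make this rigorous via a Coifman--Meyer/pseudodifferential commutator argument for a symbol of order $s$ whose first difference is of order $s-1$.

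The hard part will be the comparable-frequency remainder $|k-l|\le 2$. Since two high frequencies can combine to an arbitrarily low output, the output pieces are no longer almost orthogonal, and I would instead argue by duality, pairing with $h\in L^2(\mathbb{T})$ and summing products of the type $\|Q_y^kF\|_{L^\infty}\|Q_y^{\sim k}G\|_{L^2}$ by a Cauchy--Schwarz/Schur trick to recover $\|J_y^sF\|_{L^2}\|G\|_{L^\infty}$. The inhomogeneous correction $\|F\|_{L^\infty}\|J_y^{s-1}G\|_{L^2}$ — which is absent in the homogeneous formulation and appears precisely because $J_y^s$ is the inhomogeneous Bessel potential — would emerge from the bounded-frequency interactions, where $\langle n\rangle^s\sim1$ forbids any derivative gain and $J_y^s\sim J_y^{s-1}\sim\mathrm{Id}$, so that a crude $L^\infty\times L^2$ bound is both necessary and sufficient. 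I expect this remainder/low-frequency bookkeeping, rather than the symbol mean value estimate, to be the main technical obstacle.
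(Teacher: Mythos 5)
Your reduction to a fiberwise one-dimensional commutator estimate on $\mathbb{T}$ (with $x$ as a parameter), followed by integration in $x$ and pulling the $L^\infty_y$ factors out to $L^\infty_{xy}$, is exactly the paper's proof; the paper simply quotes the one-dimensional estimate as Lemma 9.A.1 of Ionescu--Kenig rather than reproving it. Your Littlewood--Paley sketch of that one-dimensional lemma is additional work the paper does not carry out, but the overall approach is the same and the argument is correct.
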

\begin{proof}
	By Lemma 9.A.1 in \cite{ionescu2009local}, 
	\begin{align*}
		\left\Vert J_{y}^{s}\left(fg\right)-fJ_{y}^{s}g\right\Vert _{L^{2}}&=\left\Vert\left\Vert J_{y}^{s}\left(fg\right)-fJ_{y}^{s}g\right\Vert _{L_{y}^{2}}\right\Vert _{L_{x}^{2}}\\
		&\leq C_{s} \left\Vert\left\Vert J_{y}^{s}f\right\Vert _{L_{y}^{2}}\left\Vert g\right\Vert _{L_{y}^{\infty}}+\left(\left\Vert f\right\Vert _{L_{y}^{\infty}}+\left\Vert \partial_{y} f\right\Vert _{L_{y}^{\infty}}\right)\left\Vert J_{y}^{s-1}g\right\Vert _{L_{y}^{2}}\right\Vert _{L_{x}^{2}}
		\\
		&\leq C_{s} \left\Vert J_{y}^{s}f\right\Vert _{L^{2}}\left\Vert g\right\Vert _{L^{\infty}}+\left(\left\Vert f\right\Vert _{L^{\infty}}+\left\Vert \partial_{y} f\right\Vert _{L^{\infty}}\right)\left\Vert J_{y}^{s-1}g\right\Vert _{L^{2}}
	\end{align*}
\end{proof}
\begin{lemma}[Energy estimate]\label{supper} Let $\alpha>-1$, $\beta\geq1$ and $u$ solution of IVP (\ref{bipe}) with $\phi\in H^{\infty}\left(\mathbb{R}\times\mathbb{T}\right)$.
 Then,
	\begin{equation*}
		\underset{0<t<T}{\sup}\left\Vert u\right\Vert _{H^{s}}\lesssim e^{\left(\left\Vert u\right\Vert _{L_{T}^{1}L_{xy}^{\infty}}+\left\Vert \partial_{x}u\right\Vert _{L_{T}^{1}L_{xy}^{\infty}}+\left\Vert \partial_{y}u\right\Vert _{L_{T}^{1}L_{xy}^{\infty}}\right)}\left\Vert \phi\right\Vert _{H^{s}},
	\end{equation*}
	 for any $s\geq1$,  $T\in\left[0,1\right]$.
\end{lemma}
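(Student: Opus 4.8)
The plan is to prove the energy estimate by the standard energy method: differentiate the $H^s$ norm of the solution in time, use the equation to replace $\partial_t u$, and control the nonlinear contributions using the commutator estimate of Proposition \ref{CONMUTADORBIPE} and Gronwall's inequality. Since $\|u\|_{H^s} \sim \|J_x^s u\|_{L^2} + \|J_y^s u\|_{L^2}$, I would estimate the two pieces separately and symmetrically. For the $J_y^s$ piece, I apply $J_y^s$ to the equation, take the $L^2$ inner product with $J_y^s u$, and observe that the linear dispersive term $\partial_x(D_x^{1+\alpha} \pm D_y^{1+\beta})u$ contributes nothing to $\frac{d}{dt}\|J_y^s u\|_{L^2}^2$. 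This is because the operator $\partial_x(D_x^{1+\alpha} \pm D_y^{1+\beta})$ is skew-adjoint on $L^2$ and commutes with $J_y^s$ (both are Fourier multipliers), so $\langle J_y^s \partial_x(\cdots)u,\ J_y^s u\rangle = 0$ after integration by parts.

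The heart of the matter is the nonlinear term $u u_x = \tfrac12 \partial_x(u^2)$. I would write
\begin{align*}
\frac{1}{2}\frac{d}{dt}\left\Vert J_y^s u\right\Vert_{L^2}^2 = -\left\langle J_y^s(uu_x),\ J_y^s u\right\rangle_{L^2}.
\end{align*}
The standard trick is to split this using the commutator: write $J_y^s(u\,u_x) = u\,J_y^s u_x + \bigl(J_y^s(u u_x) - u J_y^s u_x\bigr)$. The first term pairs against $J_y^s u$ to give $\langle u\,\partial_x J_y^s u,\ J_y^s u\rangle = \tfrac12\int u\,\partial_x\bigl((J_y^s u)^2\bigr) = -\tfrac12\int u_x (J_y^s u)^2$, which is bounded by $\|u_x\|_{L^\infty}\|J_y^s u\|_{L^2}^2$. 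For the commutator term, I apply Proposition \ref{CONMUTADORBIPE} with $g = u_x$, yielding a bound
\begin{align*}
\left\Vert J_y^s(uu_x) - u J_y^s u_x\right\Vert_{L^2} \lesssim \left\Vert J_y^s u\right\Vert_{L^2}\left\Vert u_x\right\Vert_{L^\infty} + \left(\left\Vert u\right\Vert_{L^\infty} + \left\Vert \partial_y u\right\Vert_{L^\infty}\right)\left\Vert J_y^{s-1} u_x\right\Vert_{L^2}.
\end{align*}
Since $\|J_y^{s-1}u_x\|_{L^2} = \|J_y^{s-1}\partial_x u\|_{L^2} \lesssim \|J_x^s u\|_{L^2} + \|J_y^s u\|_{L^2} \lesssim \|u\|_{H^s}$ for $s\geq 1$, pairing against $J_y^s u$ and using Cauchy–Schwarz gives an overall bound of the form $\bigl(\|u\|_{L^\infty} + \|\partial_x u\|_{L^\infty} + \|\partial_y u\|_{L^\infty}\bigr)\|u\|_{H^s}^2$.

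For the $J_x^s$ piece I repeat the argument, but here I need the corresponding commutator estimate in the $x$-variable, which is the classical Kato–Ponce commutator on $\mathbb{R}$ (applied fiberwise in $y$, exactly as Proposition \ref{CONMUTADORBIPE} was derived from the one-dimensional Lemma 9.A.1 of \cite{ionescu2009local}); this produces the same type of bound controlled by $\|u\|_{L^\infty}+\|\partial_x u\|_{L^\infty}$. Adding the two pieces yields
\begin{align*}
\frac{d}{dt}\left\Vert u\right\Vert_{H^s}^2 \lesssim g(t)\left\Vert u\right\Vert_{H^s}^2, \qquad g(t) = \left\Vert u(t)\right\Vert_{L^\infty} + \left\Vert \partial_x u(t)\right\Vert_{L^\infty} + \left\Vert \partial_y u(t)\right\Vert_{L^\infty}.
\end{align*}
An application of Gronwall's inequality then gives $\sup_{0<t<T}\|u\|_{H^s}^2 \lesssim \exp\bigl(\int_0^T g(t)\,dt\bigr)\|\phi\|_{H^s}^2$, and taking square roots produces exactly the claimed bound with the $L_T^1 L_{xy}^\infty$ norms in the exponent. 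The main obstacle is bookkeeping the commutator correctly so that only \emph{first} derivatives of $u$ (not $J^s$-level derivatives) appear inside the $L^\infty$ norms in the exponent — this is precisely what the Kato–Ponce structure of Proposition \ref{CONMUTADORBIPE} delivers, since it trades one derivative off $g$ onto $f$ and leaves only $\|u\|_{L^\infty}$, $\|\partial_y u\|_{L^\infty}$, and (in the $x$-analogue) $\|\partial_x u\|_{L^\infty}$ on the low-regularity factor.
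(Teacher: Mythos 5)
Your proposal is correct and follows essentially the same route as the paper: apply $J_y^s$ (resp.\ $J_x^s$) to the equation, pair with $J_y^s u$, let the skew-adjoint dispersive part vanish, control the nonlinearity via the commutator splitting and Proposition \ref{CONMUTADORBIPE} (and its $x$-analogue from the one-dimensional Kato--Ponce estimate), and conclude with Gronwall. Your write-up actually supplies details the paper leaves implicit, such as the integration by parts producing $-\tfrac12\int u_x (J_y^s u)^2$ and the verification that $\left\Vert J_y^{s-1}\partial_x u\right\Vert_{L^2}\lesssim \left\Vert u\right\Vert_{H^s}$ for $s\geq 1$.
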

\begin{proof}We apply $J_{y}^{s}$ to equation in (\ref{bipe}),   multiply by $J_{y}^{s}u$ and integrate, to obtain
	\begin{equation*}
	\int_{\mathbb{R}\times\mathbb{T}} J_{y}^{s}u_{t}J_{y}^{s}udxdy-\int_{\mathbb{R}\times\mathbb{T}} J_{y}^{s}\partial_{x}D_{x}^{1+\alpha}uJ_{y}^{s}udxdy\pm\int_{\mathbb{R}\times\mathbb{T}} J_{y}^{s}\partial_{x}D_{y}^{1+\beta}uJ_{y}^{s}udxdy
	\end{equation*}
	\begin{align*}
		+\int_{\mathbb{R}\times\mathbb{T}} J_{y}^{s}uu_{x}J_{y}^{s}udxdy=0,
	\end{align*}
	
	that,  integrating by parts and applying the  Proposition \ref{CONMUTADORBIPE}, transform this equality in
	\begin{align*}
		\frac{1}{2}\frac{d}{dt}\left\Vert J_{y}^{s}u\right\Vert _{L_{xy}^{2}}^{2}&\lesssim\left\Vert \partial_{x}u\right\Vert _{L_{xy}^{\infty}}\left\Vert J_{y}^{s}u\right\Vert _{L_{xy}^{2}}+\left\Vert J_{y}^{s}u\right\Vert _{L_{xy}^{2}}\left\Vert [J_{y}^{s},u]\partial_{x}u\right\Vert _{L_{xy}^{2}}\\&
		\lesssim\left\Vert \partial_{x}u\right\Vert _{L_{xy}^{\infty}}\left\Vert J_{y}^{s}u\right\Vert _{L_{xy}^{2}}
		\\&+\left\Vert \partial_{x}u\right\Vert _{L_{xy}^{\infty}}\left\Vert J_{y}^{s}u\right\Vert _{L_{xy}^{2}}+\left(\left\Vert u\right\Vert _{L_{xy}^{\infty}}+\left\Vert \partial_{y} u\right\Vert _{L_{xy}^{\infty}}\right)\left\Vert
		J_{y}^{s-1}\partial_{x}u\right\Vert _{L_{xy}^{2}}
	\end{align*}
A similar argument shows that
\begin{align*}
		\frac{1}{2}\frac{d}{dt}\left\Vert J_{x}^{s}u\right\Vert _{L_{xy}^{2}}^{2}&\lesssim\left\Vert \partial_{x}u\right\Vert _{L_{xy}^{\infty}}\left\Vert J_{x}^{s}u\right\Vert _{L_{xy}^{2}}+\left\Vert J_{x}^{s}u\right\Vert _{L_{xy}^{2}}\left\Vert [J_{x}^{s},u]\partial_{x}u\right\Vert _{L_{xy}^{2}}.
\end{align*}
Gronwall's inequality, implies the result.
\end{proof}
Now, we remember some facts that we need for the next result.
\begin{theorem}\label{ruleLeib}
	 [Leibniz rule] Let $1<p<\infty$, $0<s<1$ and $f,g\in H^{\infty}(\mathbb{R}\times\mathbb{T})
	 $ Then,
	\begin{equation*}
		\left\Vert J_{x}^{s}\left(fg\right)\right\Vert _{L^{p}}\leq C \left(\left\Vert J_{x}^{s}f\right\Vert _{L^{\infty}}\left\Vert g\right\Vert _{L^{p}}+\left\Vert f\right\Vert _{L^{\infty}}\left\Vert J_{x}^{s}g\right\Vert _{L^{p}}\right),  
	\end{equation*}

\end{theorem}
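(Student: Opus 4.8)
The plan is to reduce this two--dimensional estimate on the cylinder to the classical one--dimensional fractional Leibniz rule in the variable $x$, exploiting that $J_x^s=(1-\partial_x^2)^{s/2}$ is a Fourier multiplier acting only in $x$. The crucial structural observation is that, for a.e.\ fixed $y\in\mathbb{T}$, one has the slicewise identity $J_x^s(fg)(\cdot,y)=J^s\big(f(\cdot,y)\,g(\cdot,y)\big)$, where $J^s$ denotes the one--dimensional Bessel potential operator of order $s$ on $\mathbb{R}$. Since $f,g\in H^{\infty}(\mathbb{R}\times\mathbb{T})$, the slices $f(\cdot,y),g(\cdot,y)$ belong to $\mathcal{S}(\mathbb{R})$ for each $y$ and depend measurably on $y$, so every norm appearing below is well defined and measurable in $y$.

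First I would invoke the one--dimensional Leibniz (Kato--Ponce) inequality with the $L^{\infty}$ endpoint: for $1<p<\infty$, $0<s<1$ and $u,v\in\mathcal{S}(\mathbb{R})$,
\[
\left\Vert J^{s}(uv)\right\Vert_{L_x^{p}}\lesssim \left\Vert J^{s}u\right\Vert_{L_x^{\infty}}\left\Vert v\right\Vert_{L_x^{p}}+\left\Vert u\right\Vert_{L_x^{\infty}}\left\Vert J^{s}v\right\Vert_{L_x^{p}}.
\]
Applying this with $u=f(\cdot,y)$ and $v=g(\cdot,y)$ yields, for a.e.\ $y$,
\[
\left\Vert J_x^{s}(fg)(\cdot,y)\right\Vert_{L_x^{p}}\lesssim \left\Vert J_x^{s}f(\cdot,y)\right\Vert_{L_x^{\infty}}\left\Vert g(\cdot,y)\right\Vert_{L_x^{p}}+\left\Vert f(\cdot,y)\right\Vert_{L_x^{\infty}}\left\Vert J_x^{s}g(\cdot,y)\right\Vert_{L_x^{p}},
\]
where the endpoint Hölder relation $\tfrac1p=\tfrac1\infty+\tfrac1p$ forces the $L_x^p$ factor on the remaining terms.

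Then I would raise this pointwise bound to the $p$--th power, integrate in $y\in\mathbb{T}$, and use $(a+b)^{p}\lesssim a^{p}+b^{p}$ to split into two terms. In each term I would extract the $L_x^{\infty}$ factor as a supremum over $y$, via $\sup_{y}\left\Vert J_x^{s}f(\cdot,y)\right\Vert_{L_x^{\infty}}=\left\Vert J_x^{s}f\right\Vert_{L^{\infty}(\mathbb{R}\times\mathbb{T})}$ together with $\int_{\mathbb{T}}\left\Vert g(\cdot,y)\right\Vert_{L_x^{p}}^{p}\,dy=\left\Vert g\right\Vert_{L^{p}(\mathbb{R}\times\mathbb{T})}^{p}$, and symmetrically for the second term. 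Since $\left\Vert J_x^{s}(fg)\right\Vert_{L^{p}(\mathbb{R}\times\mathbb{T})}^{p}=\int_{\mathbb{T}}\left\Vert J_x^{s}(fg)(\cdot,y)\right\Vert_{L_x^{p}}^{p}\,dy$, taking $p$--th roots recovers exactly the claimed inequality. All of this is Hölder's inequality and Fubini, so once the slicewise estimate is in hand the reassembly is routine.

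The only genuine difficulty is justifying the one--dimensional ingredient with an $L^{\infty}$ factor rather than the more standard Kato--Ponce rule with all exponents in $(1,\infty)$; this endpoint form for $0<s<1$ is classical and can simply be cited from the Kenig--Ponce--Vega/Grafakos literature. If a self--contained argument were preferred, one could establish it through a Littlewood--Paley decomposition in $x$ combined with pointwise commutator (Coifman--Meyer type) estimates, in parallel with the argument underlying Proposition \ref{CONMUTADORBIPE}.
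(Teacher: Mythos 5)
Your proposal is correct and follows essentially the same route as the paper: slice in $y$, apply the one--dimensional Kenig--Ponce--Vega fractional Leibniz rule with the $L^\infty$ endpoint to $f(\cdot,y)g(\cdot,y)$, then take the $L^p_y$ norm and pull out the $L^\infty_x$ factors as suprema over $y$. The paper's proof is exactly this computation, also citing \cite{kenig1993well} for the one--dimensional ingredient.
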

\begin{proof}
	By Kenig, Ponce and Vega in \cite{kenig1993well} we get,
	\begin{align*}
		\left\Vert J_{x}^{s}\left(fg\right)\right\Vert _{L^{p}}&\leq\left\Vert\left\Vert J_{x}^{s}\left(fg\right)\right\Vert _{L_{x}^{p}}\right\Vert _{L_{y}^{p}}\\&\leq C\left\Vert \left(\left\Vert J_{x}^{s}f\right\Vert _{L_{x}^{\infty}}\left\Vert g\right\Vert _{L_{x}^{p}}+\left\Vert f\right\Vert _{L_{x}^{\infty}}\left\Vert J_{x}^{s}g\right\Vert _{L_{x}^{p}}\right)\right\Vert _{L_{y}^{p}} 
		\\&\leq
		C\left(\left\Vert J_{x}^{s}f\right\Vert _{L^{\infty}}\left\Vert g\right\Vert _{L^{p}}+\left\Vert f\right\Vert _{L^{\infty}}\left\Vert J_{x}^{s}g\right\Vert _{L^{p}}\right)
	\end{align*}
\end{proof}
\begin{lemma}
	For every $0<s<1$ there exists a constant C such that for every $u\in L_{xy}^{\infty}$ satisfying $\partial_{x}u\in L_{xy}^{\infty}$, one has
	\begin{align*}
			\left\Vert J_{x}^{s}u\right\Vert _{L_{x}^{\infty}}\leq C\left(\left\Vert u\right\Vert _{L_{x}^{\infty}}+\left\Vert \partial_{x}u\right\Vert _{L_{x}^{\infty}}\right)
	\end{align*}
\end{lemma}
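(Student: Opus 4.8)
The plan is to reduce to a one-dimensional estimate and then split into low and high frequencies. Since $J_{x}^{s}$ acts only in the $x$ variable, with $y$ entering merely as a parameter, it suffices to prove that for $f\in L^{\infty}(\mathbb{R})$ with $f'\in L^{\infty}(\mathbb{R})$ and $0<s<1$,
\[
\left\Vert J_{x}^{s}f\right\Vert _{L^{\infty}(\mathbb{R})}\lesssim \left\Vert f\right\Vert _{L^{\infty}(\mathbb{R})}+\left\Vert f'\right\Vert _{L^{\infty}(\mathbb{R})},
\]
recalling that $J_{x}^{s}$ is the Fourier multiplier with symbol $(1+\xi^{2})^{s/2}$. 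Fix a smooth even cutoff $\theta:\mathbb{R}\to[0,1]$ with $\theta\equiv1$ on $\{|\xi|\le1\}$ and $\mathrm{supp}\,\theta\subset\{|\xi|\le2\}$, and write $J_{x}^{s}=\theta(D_{x})J_{x}^{s}+(1-\theta(D_{x}))J_{x}^{s}$. For the low-frequency piece the symbol $\theta(\xi)(1+\xi^{2})^{s/2}$ is smooth and compactly supported, hence it is the Fourier transform of a Schwartz function; thus $\theta(D_{x})J_{x}^{s}$ is convolution with an $L^{1}$ kernel and $\left\Vert \theta(D_{x})J_{x}^{s}f\right\Vert _{L^{\infty}}\lesssim \left\Vert f\right\Vert _{L^{\infty}}$.

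For the high-frequency piece I would factor out one derivative. Setting
\[
m(\xi):=\frac{(1-\theta(\xi))(1+\xi^{2})^{s/2}}{i\xi},
\]
we have $(1-\theta(D_{x}))J_{x}^{s}=m(D_{x})\,\partial_{x}$, so that
\[
\left\Vert (1-\theta(D_{x}))J_{x}^{s}f\right\Vert _{L^{\infty}}=\left\Vert m(D_{x})f'\right\Vert _{L^{\infty}}\le \left\Vert K\right\Vert _{L^{1}}\left\Vert f'\right\Vert _{L^{\infty}},
\]
where $K:=\mathcal{F}^{-1}m$. The function $m$ is smooth, since the cutoff $1-\theta$ removes the pole at $\xi=0$, and for $|\xi|\ge2$ it behaves like $\mathrm{sgn}(\xi)|\xi|^{s-1}$; that is, $m$ is a symbol of order $s-1\in(-1,0)$ obeying $|\partial_{\xi}^{N}m(\xi)|\lesssim_{N}(1+|\xi|)^{s-1-N}$ for all $N$.

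The main obstacle is verifying $K\in L^{1}(\mathbb{R})$. I would establish this by the standard dichotomy. For $|x|\ge1$, integrating by parts $N$ times in $\xi$ and using the derivative bounds on $m$ (which are integrable once $N\ge1$, as $s<1$) gives rapid decay $|K(x)|\lesssim_{N}|x|^{-N}$; the absence of any low-frequency singularity in $m$ is exactly what makes this work. Near the origin, the order-$(s-1)$ homogeneous behaviour of $m$ at infinity produces the singularity $|K(x)|\lesssim |x|^{-s}$, which is integrable precisely because $s<1$ (this is the one-dimensional Riesz-potential computation $\mathcal{F}^{-1}|\xi|^{s-1}\sim |x|^{-s}$). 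Hence $K\in L^{1}(\mathbb{R})$ and $m(D_{x}):L^{\infty}\to L^{\infty}$ is bounded. Adding the two contributions yields the claimed inequality, and since every constant is independent of the parameter $y$, the full statement on $\mathbb{R}\times\mathbb{T}$ follows.
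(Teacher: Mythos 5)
Your proof is correct. The paper does not actually prove this lemma --- it simply cites Molinet, Saut and Tzvetkov --- so your argument supplies the standard self-contained proof: low/high frequency splitting, absorbing one derivative into the high-frequency multiplier, and checking that the resulting order-$(s-1)$ symbol has an $L^{1}$ kernel (rapid decay for $|x|\ge 1$ by integration by parts, an integrable $|x|^{-s}$ singularity at the origin since $s<1$). The only point to make fully rigorous is the near-origin kernel bound: since $m\notin L^{1}$, the pointwise estimate $|K(x)|\lesssim|x|^{-s}$ should be obtained via a dyadic (Littlewood--Paley) decomposition of $m$ and summation of the piecewise bounds $|K_{j}(x)|\lesssim 2^{js}\min\bigl(1,(2^{j}|x|)^{-N}\bigr)$, which is exactly the ``standard dichotomy'' you invoke; with that spelled out the argument is complete.
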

\begin{proof}
	See Molinet, Saut and Tzvetkov in \cite{molinet2007global}
\end{proof}
As a consequence of Lemma \ref{unot} , we obtain:
\begin{proposition}\label{constante}
	Let $\alpha>-1$, $\beta\geq1$, $u$  be a solution del IVP (\ref{bipe}) with  $\phi\in H^{\infty}\left(\mathbb{R}\times\mathbb{T}\right)$. Then,  for any  $s>\frac{6-\alpha}{4}-\frac{1}{2(1+\beta)}+\frac{\lceil\beta\rceil}{2}$,  there exists  $T=T(\left\Vert \phi\right\Vert _{s},s)$ and  a constant $C_{T}(\left\Vert \phi\right\Vert _{s},s)$ such that,
	\begin{align}\label{constantestyc}
		g\left(T\right):=\int_{0}^{T}\left(\left\Vert u\right\Vert _{L_{xy}^{\infty}\left(\mathbb{R}\times\mathbb{T}\right)}+\left\Vert u_{x}\right\Vert _{L_{xy}^{\infty}\left(\mathbb{R}\times\mathbb{T}\right)}+\left\Vert u_{y}\right\Vert _{L_{xy}^{\infty}\left(\mathbb{R}\times\mathbb{T}\right)}\right)dt'\leq C_{T}
	\end{align}
\end{proposition}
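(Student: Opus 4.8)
The plan is to close a continuity (bootstrap) argument in which the three $L^1_T L^\infty_{xy}$ quantities composing $g(T)$ are controlled, through the Strichartz-type Lemma \ref{unot}, by the energy $\Lambda_T:=\sup_{0<t<T}\|u\|_{H^s}$ plus a nonlinear remainder carrying a factor of $g(T)$, while $\Lambda_T$ is in turn controlled by the energy estimate Lemma \ref{supper}. First I would put the equation in the form required by Lemma \ref{unot}. Since the spatial operator $L:=\partial_x(D_x^{1+\alpha}\pm D_y^{1+\beta})$ is a Fourier multiplier commuting with $\partial_x$ and $\partial_y$, each of $u$, $u_x$, $u_y$ solves $\partial_t v-Lv=\partial_x f_v$, with $f_u=-\tfrac12 u^2$, $f_{u_x}=-u u_x$ and $f_{u_y}=-u u_y$ (using $\partial_x(uu_x)=\partial_x^2(\tfrac12 u^2)$ and $\partial_y(uu_x)=\partial_x(u u_y)$).

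Next I would apply Lemma \ref{unot} to each of $u,u_x,u_y$ with exponents $s_1\in(0,1)$ and $s_2>\tfrac{\lceil\beta\rceil}{2}$, chosen with $s_1>\tfrac{2-\alpha}{4}-\tfrac{1}{2(1+\beta)}$ and $s_1+s_2+1\le s$. Such a choice is admissible precisely because the hypothesis forces $s>\tfrac{6-\alpha}{4}-\tfrac{1}{2(1+\beta)}+\tfrac{\lceil\beta\rceil}{2}=\big(\tfrac{2-\alpha}{4}-\tfrac{1}{2(1+\beta)}\big)+\tfrac{\lceil\beta\rceil}{2}+1$, i.e. $s$ strictly exceeds the infimum of $s_1+s_2+1$ over the admissible ranges (note $\tfrac{2-\alpha}{4}-\tfrac{1}{2(1+\beta)}<1$ since $\alpha>-1$, so $s_1\in(0,1)$ is compatible). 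The linear contributions are then handled by $\|J_x^{s_1}J_y^{s_2}u\|_{L^2}\lesssim\|u\|_{H^{s_1+s_2}}$ and $\|J_x^{s_1}J_y^{s_2}u_x\|_{L^2},\,\|J_x^{s_1}J_y^{s_2}u_y\|_{L^2}\lesssim\|u\|_{H^{s_1+s_2+1}}$, all bounded by $\Lambda_T$.

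For the nonlinear remainders $\|J_x^{s_1}f_v\|_{L^1_T L^2}$ I would expand each $f_v$ as a product and use the fractional Leibniz rule Theorem \ref{ruleLeib} with $p=2$ together with the interpolation inequality $\|J_x^{s_1}u\|_{L^\infty_x}\lesssim\|u\|_{L^\infty_x}+\|u_x\|_{L^\infty_x}$ recalled above; this dominates $\|J_x^{s_1}f_v(t)\|_{L^2}$ by $\big(\|u\|_{L^\infty_{xy}}+\|u_x\|_{L^\infty_{xy}}\big)\|u\|_{H^s}$, whose time integral is $\lesssim \Lambda_T\, g(T)$. Collecting the three applications of Lemma \ref{unot} yields the self-referential bound $g(T)\lesssim T^{1/2}\Lambda_T\big(1+g(T)\big)$. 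Coupling this with Lemma \ref{supper}, $\Lambda_T\lesssim e^{g(T)}\|\phi\|_{H^s}$, gives the closed inequality $g(T)\lesssim T^{1/2}e^{g(T)}\big(1+g(T)\big)\|\phi\|_{H^s}$.

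Finally, since $g$ is continuous, nondecreasing and $g(0)=0$, I would run a bootstrap: choosing $T=T(\|\phi\|_{H^s},s)$ small enough, the set $\{T'\in[0,T]: g(T')\le 1\}$ is nonempty, closed (by continuity) and open (since on it the inequality forces $g\le\tfrac12<1$), hence equals $[0,T]$, giving $g(T)\le C_T$. The main obstacle is the sharp regularity bookkeeping in the second step: one must verify that admissible exponents with $s_1\in(0,1)$ and $s_1+s_2+1\le s$ exist, which is exactly what the stated threshold on $s$ encodes; once that is in place the remaining difficulty is purely the self-consistent closing of the coupled inequalities via continuity.
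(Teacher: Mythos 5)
Your proposal is correct and follows essentially the same route as the paper: apply Lemma \ref{unot} to $u$, $u_x$, $u_y$ with $f=-\tfrac12 u^2$, $-uu_x$, $-uu_y$, bound the linear terms by $\sup_{0<t<T}\|u\|_{H^s}$ via the exponent bookkeeping $s_1+s_2+1\le s$, handle the nonlinear terms with the fractional Leibniz rule and the $L^\infty$ interpolation lemma, and close the resulting inequality $g(T)\lesssim T^{1/2}e^{g(T)}(1+g(T))\|\phi\|_{H^s}$ by a continuity argument. If anything, your write-up is more explicit than the paper's (retaining the $T^{1/2}$ factor and spelling out the bootstrap), but the underlying argument is identical.
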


\begin{proof}[\textbf{Proof of Proposition 3.7}]
	We apply Lemma \ref{unot}  with $s_{1}>\frac{2-\alpha}{4}-\frac{1}{2(1+\beta)}$  and  $s_2>\frac{\lceil \beta \rceil}{2}$  in  $u$,
	$\partial_xu$,  $\partial_{y}u$ and respectively by $f=\frac{1}{2}u^{2},\frac{1}{2}\partial_xu^{2},\frac{1}{2}\partial_yu^{2}$. 
	We get,
	\begin{align*}
		&\left\Vert u\right\Vert _{L_{T}^{1}L_{xy}^{\infty}}+\left\Vert u_{x}\right\Vert _{L_{T}^{1}L_{xy}^{\infty}}+\left\Vert u_{y}\right\Vert _{L_{T}^{1}L_{xy}^{\infty}}\\&
		\lesssim_{s}T^{\frac{1}{2}}\left(\left\Vert J_{x}^{s_{1}}J_{y}^{s_{2}}u\right\Vert _{L_{T}^{\infty}L_{xy}^{2}}+\left\Vert J_{x}^{s_{1}}J_{y}^{s_{2}}\partial_{x}u\right\Vert _{L_{T}^{\infty}L_{xy}^{2}}+\left\Vert J_{x}^{s_{1}}J_{y}^{s_{2}}\partial_{y}u\right\Vert _{L_{T}^{\infty}L_{xy}^{2}}\right.\\&+ \left.\left\Vert J_{x}^{s_{1}}u^{2}\right\Vert _{L_{T}^{1}L_{xy}^{2}}+\left\Vert J_{x}^{s_{1}} \partial_{x}\left(u^{2}\right)\right\Vert _{L_{T}^{1}L_{xy}^{2}}+\left\Vert J_{x}^{s_{1}} \partial_{y}\left(u^{2}\right)\right\Vert _{L_{T}^{1}L_{xy}^{2}}\right)  
	\end{align*}
The first three terms on the right-hand side the above inequality are estimate respectively by Young's inequality, since $\frac{s_2}{s_1+s_2}+\frac{s_1}{s_1+s_2}=1$, $\frac{s_2}{s_1+s_2+1}+\frac{s_1+1}{s_1+s_2+1}=1$ and  $\frac{s_1}{s_1+s_2+1}+\frac{s_2+1}{s_1+s_2+1}=1$.
The last three can be estimated   by applying  Leibniz's product rule  Theorem \ref{ruleLeib}.	Then, we obtain the inequality,
	\begin{align*}
		g\left(T\right)\lesssim\left\Vert \phi\right\Vert _{s}e^{g\left(T\right)}\left(1+g\left(T\right)\right).
	\end{align*}
 An argument of continuity complete the proof, if $T\leq T_{0}C_{T}(\left\Vert \phi\right\Vert _{s},s)$ is small enough.
\end{proof}

In this point, we can use standard compactness arguments as in Kenig \cite{kenig2015well} for proving Theorem \ref{teoimpo2}.  
\section{Proof of Theorem 1.1}
Let $s>\frac{6-\alpha}{4}-\frac{1}{2(1+\beta)}+\frac{\lceil\beta\rceil}{2}$,  $\phi\;\in H^{s}\left(\mathbb{R}\times\mathbb{T}\right)$, $\phi_{\gamma}\;\in H^{\infty}\left(\mathbb{R}\times\mathbb{T}\right)$,  such that $\underset{\gamma\rightarrow\infty}{\lim}\left\Vert \phi_{\gamma}-\phi\right\Vert _{H^{s}\left(\mathbb{R}\times\mathbb{T}\right)}=0$ and $\left\Vert \phi_{\gamma}\right\Vert_{H^{s}\left(\mathbb{R}\times\mathbb{T}\right)}\leq C\left\Vert \phi\right\Vert _{H^{s}\left(\mathbb{R}\times\mathbb{T}\right)}$ and   $\left\{u_{\gamma}\right\}$ solutions of (\ref{bipe}) associated to the initial data $\left\{\phi_{\gamma}\right\}$ such that $u_{\gamma}\in C\left(\left[0,T'\right];H^{\infty}\left(\mathbb{R}\times\mathbb{T}\right)\right)$, $T'>0$  guaranteed by the local well-posedness of (\ref{bipe}) in $H^{s}(\mathbb{R}\times\mathbb{T})$ for $s>2$.  We can extend $u_{\gamma}$ on a time interval $[0,T]$,  $T=T\left(\left\Vert \phi\right\Vert _{H^{s}},s\right)$ by Proposition \ref{constante} and also we show that  there is a constant $C_{T}$  such that,
\begin{align}\label{desinfinito2}
	\int_{0}^{T}\left(\left\Vert u_{\gamma}\right\Vert _{L_{xy}^{\infty}}+\left\Vert \partial_{x} u_{\gamma}\right\Vert _{L_{xy}^{\infty}}+\left\Vert \partial_{y}u_{\gamma}\right\Vert _{L_{xy}^{\infty}}\right)dt\leq C_{T}
\end{align}
We deduce  from energy estimate Lemma \ref{supper} and previous inequality  (\ref{desinfinito2}) that,
\begin{align}\label{ct22}
	\underset{0<t<T}{\sup}\left\Vert u_{\gamma}\right\Vert _{H^{s}\left(\mathbb{R}\times\mathbb{T}\right)}\leq C_{T}
\end{align}
by using Gronwall's inequality and inequality (\ref{desinfinito2}) we get,
\begin{align}\label{ldos}
	\underset{\gamma,\mu\rightarrow\infty}{\lim}\underset{0<t<T}{\sup}\left\Vert u_{\gamma}-u_{\mu}\right\Vert _{0}=0  
\end{align}
Now, an interpolation argument together with the inequality (\ref{ct22}) and (\ref{ldos}), allow us to find  $u\;\in C\left(\left[0,T\right];H^{s^{,}}\left(\mathbb{R}\times\mathbb{T}\right)\right)\;\cap\; L^{\infty}\left(\left[0,T\right];H^{s}\left(\mathbb{R}\times\mathbb{T}\right)\right)$ with $s^{,}<s$, such that $u_{\gamma}\rightarrow u$ in $C\left(\left[0,T\right];H^{s^{,}}\left(\mathbb{R}\times\mathbb{T}\right)\right)$. By  (\ref{ldos}), $u_{\gamma}\rightarrow u$ in $ C\left(\left[0,T\right];L^{2}\left(\mathbb{R}\times\mathbb{T}\right)\right)$. Hence, by weak* compactness $u\;\in L^{\infty}\left(\left[0,T\right];H^{s}\left(\mathbb{R}\times\mathbb{T}\right)\right)$. The 
uniqueness of $u$ is obtain for similar argument as above and the continuous dependence is consequence of the  Bona-Smith's method (see\cite{bona1975initial}).

\bibliographystyle{amsplain}
\bibliography{RCMBibTeX}

\end{document}